\documentclass[12pt,a4paper]{article}
\usepackage{latexsym,amssymb,amsfonts,amsmath,amsthm,nccmath,float,enumitem}
\usepackage[hidelinks]{hyperref}
\usepackage[margin=2cm]{geometry}
\usepackage{multirow}

\newtheorem{theorem}{Theorem}
\newtheorem{corollary}[theorem]{Corollary}
\newtheorem{lemma}[theorem]{Lemma}

\usepackage[dvipsnames]{xcolor}
\theoremstyle{definition}

\newtheorem{remark}[theorem]{Remark}

\usepackage{multirow}
\def \deg {{\rm deg}}
\def \wt {{\rm wt}}
\def \rank {{\rm rank}}

\def \leq {\leqslant}
\def \geq {\geqslant}
\def \B {\mathcal{B}}

\def \mod#1{{\:({\rm mod}\ #1)}}

\let\oldproofname=\proofname
\renewcommand{\proofname}{\rm\bf{\oldproofname}}

\title{\bf New lower bounds for $t$-coverings}

\author{\hspace{-0.7cm}
\begin{minipage}[c]{0.5\textwidth}
\begin{center}
Daniel Horsley\\
School of Mathematical Sciences\\
Monash University\\
Vic 3800, Australia\\
{\tt danhorsley@gmail.com}
\end{center}
\end{minipage}
\begin{minipage}[c]{0.5\textwidth}
\begin{center}
Rakhi Singh\\
IITB-Monash Research Academy \\
Indian Institute of Technology Bombay \\
Mumbai 400076, India\\
{\tt agrakhi@gmail.com}
\end{center}
\end{minipage}
}

\date{}

\begin{document}
\sloppy
\maketitle
\def\baselinestretch{1.2}\small\normalsize

\begin{abstract}
Fisher proved in 1940 that any $2$-$(v,k,\lambda)$ design with $v>k$ has at least $v$ blocks. In 1975 Ray-Chaudhuri and Wilson generalised this result by showing that every $t$-$(v,k,\lambda)$ design with $v \geq k+\lfloor t/2 \rfloor$ has at least $\binom{v}{\lfloor t/2 \rfloor}$ blocks. By combining methods used by Bose and Wilson in proofs of these results, we obtain new lower bounds on the size of $t$-$(v,k,\lambda)$ coverings. Our results generalise lower bounds on the size of $2$-$(v,k,\lambda)$ coverings recently obtained by the first author.
\end{abstract}

\section{Introduction}

For our purposes, an \emph{incidence structure} is a pair $(V,\mathcal{B})$ where $V$ is a set of \emph{points} and $\mathcal{B}$ is a multiset of subsets of $V$ called \emph{blocks}. For positive integers $t$, $v$, $k$ and $\lambda$ with $t \leq k \leq v$, a \emph{$t$-$(v,k,\lambda)$ covering} is an incidence structure $(V,\mathcal{B})$ such that $|V|=v$, $|B|=k$ for all $B \in \mathcal{B}$, and each $t$-subset of $V$ is contained in at least $\lambda$ blocks in $\mathcal{B}$. If each $t$-subset of $V$ is contained in exactly $\lambda$ blocks in $\mathcal{B}$, then $(V,\mathcal{B})$ is a \emph{$t$-$(v,k,\lambda)$ design}. For an incidence structure $(V,\mathcal{B})$ and a subset $X \subseteq V$, define $b(X)$ to be the number of blocks in $\mathcal{B}$ that contain $X$. Coverings were introduced for $t=2$ by Erd{\H{o}}s and R{\'e}nyi \cite{ErdRen} in 1956 and then generalised to arbitrary $t$ by Erd{\H{o}}s and Hanani \cite{ErdHan} in 1963.

Usually we are interested in finding coverings with as few blocks as possible. The \emph{covering number} $C_{\lambda}(v,k,t)$ is the minimum number of blocks in any $t$-$(v,k,\lambda)$ covering. When $\lambda=1$ we omit the subscript. It is convenient to set $C_{\lambda}(v,k,0)=\lambda$ for all $v$, $k$ and $\lambda$. In \cite{Rod} R\"{o}dl introduced the famous \emph{nibble} method to show that $C(v,k,t) \sim \binom{v}{t}/\binom{k}{t}$ as $v \rightarrow \infty$.

Observe that if $(V,\mathcal{B})$ is a $t$-$(v,k,\lambda)$ covering and $X$ is a subset of $V$ with $|X| \leq t$, then $(V \setminus X,\mathcal{B}')$, where $\mathcal{B}'=\{B \setminus X:B \in \mathcal{B}, X \subseteq B\}$, is a $(t-|X|)$-$(v-|X|,k-|X|,\lambda)$ covering and hence
\begin{equation}\label{equation:Subset}
b(X) \geq C_{\lambda}(v-|X|,k-|X|,t-|X|).
\end{equation}
Using this fact with $|X|=1$ and some simple counting gives
\begin{equation}\label{equation:Sch}
C_{\lambda}(v,k,t) \geq \left\lceil\mfrac{v}{k}\:C_{\lambda}(v-1,k-1,t-1)\right\rceil.
\end{equation}
Iterating this inequality yields the \emph{Sch{\"o}nheim bound} \cite{Sch} which states that $C_{\lambda}(v,k,t) \geq L_{\lambda}(v,k,t)$ where
\[L_{\lambda}(v,k,t) = \left\lceil\mfrac{v}{k} \left\lceil\mfrac{v-1}{k-1} \cdots \left\lceil\mfrac{v-t+2}{k-t+2} \left\lceil\mfrac{\lambda(v-t+1)}{k-t+1} \right\rceil \right\rceil \cdots \right\rceil \right\rceil.\]

Furthermore, Mills and Mullin \cite{MilMul} have shown  that if $vC_{\lambda}(v-1,k-1,t-1) \not\equiv 0 \mod{k}$ and $C_{\lambda}(v-1,k-1,t-1) = (\binom{v-1}{r-1}/\binom{k-1}{r-1})C_{\lambda}(v-r,k-r,t-r)$ for some $r \in \{2,\ldots,t\}$, then
\begin{equation}\label{equation:MilMul}
C_{\lambda}(v,k,t) \geq \left\lceil\mfrac{v}{k}(C_{\lambda}(v-1,k-1,t-1)+r)\right\rceil.
\end{equation}
This result is easiest to apply in the case $r=t=2$, when it states that if $\lambda(v-1) \equiv 0 \mod{k-1}$ and $\lambda v(v-1) \equiv 1 \mod{k}$, then $C_{\lambda}(v,k,t) \geq L_{\lambda}(v,k,t)+1$. A result of Keevash \cite[Theorem 6.5]{Kee} implies that, for a fixed $t$, $k$ and $\lambda$ and for all sufficiently large $v$, $C_{\lambda}(v,k,t)=h_{\lambda}(v,k,t)/\binom{k}{t}$ where $h_{\lambda}(v,k,t)$ is the size of a smallest $t$-$(v,t,\lambda)$ covering $(V,\mathcal{B})$ with the property that $\binom{k-|X|}{t-|X|}$ divides $b(X)$ for each subset $X$ of $V$ with $|X| \leq t$. In the case $t=2$, this establishes that the Sch{\"o}nheim bound with the Mills and Mullin improvement is tight for all sufficiently large $v$. Glock et al. \cite{GloKuhLoOst} have recently extended Keevash's main result.

Our interest here is principally in establishing lower bounds for covering numbers $C_{\lambda}(v,k,t)$ when $k$ is a significant fraction of $v$. Exact values for $C_{\lambda}(v,k,t)$ have been determined when $(k,t) \in \{(3,2),(4,2)\}$, when $(t,\lambda)=(2,1)$ and $v \leq \frac{13}{4}k$, and for most cases when $(t,\lambda)=(3,1)$ and $v \leq \frac{8}{5}k$ (see \cite{GorSti}). In the case $t=2$, a number of results have been proved which improve on the Sch{\"o}nheim bound in various cases where $k$ is a significant fraction of $v$ \cite{BluGreHei,BosCon,BryBucHorMaeSch,Fur,TodFP,TodLB}. A number of other lower bounds for specific parameter sets, which have been mostly obtained by computer searches, are available in literature (see \cite{Gor,GorSti}). For surveys on coverings see \cite{GorSti,MilMul}. Gordon maintains a repository for small coverings \cite{Gor}.

Fisher's inequality \cite{Fis} famously states that every $2$-$(v,k,\lambda)$ design with $v>k$ has at least $v$ blocks. Ray-Chaudhuri and Wilson \cite{raywil75} generalised this result to higher $t$ by showing that every $t$-$(v,k,\lambda)$ design with $v \geq k+s$ has at least $\binom{v}{s}$ blocks for any positive integer $s \leq \lfloor\frac{t}{2}\rfloor$. Subsequently Wilson \cite{wil82} gave an alternate proof of this generalised result using so-called \emph{higher incidence matrices}. In this paper we demonstrate how an approach based on these matrices can be used to obtain improved lower bounds on covering numbers $C_\lambda(v,k,t)$. Our results generalise both the Ray-Chaudhuri and Wilson result of \cite{raywil75} and the more recent results of \cite{Hor} which established lower bounds for $C_\lambda(v,k,2)$.

To avoid triviality, we often consider only $t$-$(v,k,\lambda)$ coverings with $2 < k < v$. The bounds we prove in this paper apply to covering numbers $C_\lambda(v,k,t)$ for arbitrary $\lambda$. However in our discussions, as in most of the literature concerning coverings with $t \geq 3$, we will concentrate on the case $\lambda=1$. The methods in this paper should also be applicable to packings, but we do not pursue this here.

In the next section we discuss our proof strategy and prove some preliminary results. In Sections~\ref{basicSec},~\ref{genSec} and~\ref{impSec} we then prove and discuss bounds that generalise Theorems 1, 11 and 14 of \cite{Hor} respectively. The results in Sections~\ref{genSec} and~\ref{impSec} make use of a result of Caro and Tuza \cite{CarTuz} which guarantees an $m$-independent set of a certain size in a multigraph with a specified degree sequence. In Section~\ref{Section:exact} we exhibit infinite families of parameter sets $t$-$(v,k,\lambda)$ for which our results improve on the best bounds previously known.

\section{Strategy and preliminary results}\label{Section:strategy}

To prove our results we will combine ideas from \cite{Hor} with those from a proof by Wilson \cite{wil82} of the generalisation of Fisher's inequality to higher $t$. The methods in \cite{Hor} were, in turn, inspired by a proof by Bose \cite{Bos} of Fisher's inequality. Following \cite{wil82}, we make use of higher incidence matrices. For a nonnegative integer $s$, the \emph{$s$-incidence matrix} of an incidence structure $(V,\mathcal{B})$ is the matrix whose rows are indexed by the $s$-subsets of $V$, whose columns are indexed by the blocks in $\mathcal{B}$, and where the entry in row $X$ and column $B$ is 1 if $X \subseteq B$ and 0 otherwise. For a set $V$ and a nonnegative integer $i$, let $\binom{V}{i}$ denote the set of all $i$-subsets of $V$.

We will make use of standard facts about positive definite matrices (see \cite[\S9.4]{Hog}). If $A$ is a square matrix whose rows and columns are indexed by the elements of a set $Z$, then a \emph{principal submatrix} of $A$ is a square submatrix whose rows and columns are both indexed by the same subset $Z'$ of $Z$. We say a real matrix is \emph{diagonally dominant} if, in each of its rows, the magnitude of the diagonal entry is strictly greater than the sum of the magnitudes of the other entries in that row. It follows easily from the well-known Gershgorin circle theorem (see \cite[p16-6]{Hog}) that real diagonally dominant matrices are positive definite. Our bounds rest on the following simple observations.

\begin{lemma}\label{Lemma:basicFacts}
Let $(V,\mathcal{B})$ be an incidence structure and let $A$ be the $s$-incidence matrix of $(V,\mathcal{B})$ for some positive integer $s$. Then
\begin{itemize}[nosep]
    \item[(i)]
$AA^T$ is the symmetric matrix whose row and columns are indexed by $\binom{V}{s}$ and where the entry in row $X$ and column $Y$ is $b(X \cup Y)$; and
    \item[(ii)]
$|\mathcal{B}| \geq \rank(AA^T)$.
\end{itemize}
\end{lemma}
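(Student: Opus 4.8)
The plan is to prove the two parts of Lemma~\ref{Lemma:basicFacts} directly from the definitions, since both are essentially standard facts about incidence matrices.

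For part (i), I would start by writing down the $(X,Y)$ entry of $AA^T$ explicitly as a sum over the blocks. Since the rows of $A$ are indexed by the $s$-subsets of $V$ and the columns by the blocks, the entry of $AA^T$ in row $X$ and column $Y$ is $\sum_{B \in \mathcal{B}} A_{X,B} A_{Y,B}$. By the definition of the $s$-incidence matrix, $A_{X,B} = 1$ precisely when $X \subseteq B$ and $A_{Y,B}=1$ precisely when $Y \subseteq B$, so the product $A_{X,B}A_{Y,B}$ equals $1$ exactly when both $X \subseteq B$ and $Y \subseteq B$, that is, when $X \cup Y \subseteq B$, and equals $0$ otherwise. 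Summing over all blocks, this count is exactly the number of blocks containing $X \cup Y$, which is $b(X \cup Y)$ by definition. That $AA^T$ is symmetric and has rows and columns indexed by $\binom{V}{s}$ is immediate. The one point worth a remark is that when $|X \cup Y| > k$ no block can contain $X \cup Y$, so the corresponding entry is $0$, consistent with $b(X \cup Y)=0$; this is automatically handled by the counting and needs no separate case.

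For part (ii), the key observation is that $A$ has exactly $|\mathcal{B}|$ columns, so $\rank(A) \leq |\mathcal{B}|$. I would then invoke the standard linear-algebra fact that $\rank(AA^T) = \rank(A)$ for any real matrix $A$ (which follows, for instance, from the fact that $A$ and $AA^T$ have the same null space over $\mathbb{R}$, since $A^Tx=0$ iff $AA^Tx=0$). Combining these gives $\rank(AA^T) = \rank(A) \leq |\mathcal{B}|$, as required.

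Neither part presents a genuine obstacle; the lemma is a bookkeeping step that sets up the later arguments. If anything, the only thing to be careful about is invoking $\rank(AA^T)=\rank(A)$ cleanly over the reals rather than appealing to it as folklore, and making sure the indexing conventions for $AA^T$ match those claimed in part (i) so that part (i) can be used directly in the rank computations of subsequent sections.
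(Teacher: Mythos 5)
Your proposal is correct and follows essentially the same route as the paper: part (i) by expanding the matrix product entrywise, and part (ii) from $A$ having $|\mathcal{B}|$ columns (the paper uses the inequality $\rank(AA^T)\leq\rank(A)$ rather than the equality over $\mathbb{R}$, but this is an immaterial difference).
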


\begin{proof}
Part (i) follows from the definition of matrix multiplication. Because $A$ has only $|\mathcal{B}|$ columns, $\rank(A) \leq |\mathcal{B}|$. Thus $|\mathcal{B}| \geq \rank(A) \geq \rank(AA^T)$, proving part (ii).
\end{proof}

By Lemma~\ref{Lemma:basicFacts} we can bound the number of blocks in a covering by bounding $\rank(AA^T)$. Our strategy to bound this rank is as follows. We first write $AA^T=P+M$ where $P$ is positive semidefinite. We then find a diagonally dominant, and hence positive definite, principal submatrix $M'$ of $M$. Because every principal submatrix of $P$ is positive semidefinite, the submatrix of $AA^T$ with row and column indices corresponding to those of $M'$ is positive definite and hence full rank. Thus the rank of $AA^T$ is at least the order of $M'$.

We choose $P$ so that the entry in row $X$ and column $Y$ for $X \neq Y$ is $b_{|X \cup Y|}$, where $b_{s+1},\ldots,b_{2s}$ are positive integers chosen so that each $i$-subset of $V$ is in at least $b_i$ blocks in $\mathcal{B}$ for $i \in \{s+1,\ldots,2s\}$. The entries on the lead diagonal of $P$ are chosen to be small as possible, given that $P$ must be positive semidefinite. We establish that $P$ is indeed positive semidefinite using an approach from \cite{wil82} in which $P$ is written as a nonnegative linear combination of Gram matrices.

We will require the following simple identity for binomial coefficients.

\begin{lemma}\label{Lemma:multinomIdent}
Let $i$ and $\ell$ be nonegative integers with $i \leq \ell$. Then
\[\medop\sum_{j=i}^{\ell} (-1)^{i+j}\tbinom{\ell}{j}\tbinom{j}{i}=
\left\{
  \begin{array}{ll}
    0, & \hbox{if $i<\ell$;} \\
    1, & \hbox{if $i=\ell$.}
  \end{array}
\right.\]
\end{lemma}

\begin{proof}
The multinomial theorem implies that the coefficient of $x^i$ in the expansion of $(x-1+1)^\ell$ is
\[\medop\sum_{j'=0}^{\ell-i} \tbinom{\ell}{i+j'}\tbinom{i+j'}{i}(-1)^{j'}=\medop\sum_{j=i}^\ell (-1)^{i+j}\tbinom{\ell}{j}\tbinom{j}{i},\]
where the equality is obtained by substituting $j=i+j'$.
So because $(x-1+1)^\ell=x^\ell$, the result now follows by equating the coefficients of $x^i$.
\end{proof}

The next lemma establishes that if $A$ is the higher incidence matrix of a $t$-$(v,k,\lambda)$ covering, then $AA^T$ has a specific form that we can exploit. Subsequent results in this paper will often explicitly assume the hypotheses of Lemma~\ref{Lemma:setUp} and use its notation.

\begin{lemma}\label{Lemma:setUp}
Let $t$, $v$, $k$, $\lambda$ and $s$ be positive integers such that $t < k < v$ and $s \leq \lfloor\frac{t}{2}\rfloor$. Let $b_{2s},b_{2s-1},\ldots,b_s$ be positive integers such that
\begin{itemize}
    \item[(i)]
$L_\lambda(v-2s,k-2s,t-2s) \leq b_{2s} \leq C_\lambda(v-2s,k-2s,t-2s)$;
    \item[(ii)]
$\lceil \frac{v-i}{k-i} b_{i+1} \rceil \leq b_i \leq C_\lambda(v-i,k-i,t-i)$ for $i = 2s-1,2s-2,\ldots,s$; and
    \item[(iii)]
$a_j \geq 0$ for $j \in \{0,\ldots,s\}$, where $a_j=\sum_{i=0}^j (-1)^{i+j}\binom{j}{i}b_{2s-i}$.
\end{itemize}
If $(V,\mathcal{B})$ is a $t$-$(v,k,\lambda)$ covering and $A$ is the $s$-incidence matrix of $(V,\mathcal{B})$, then $b(Z) \geq b_{|Z|}$ for any $Z \subseteq V$ with $|Z| \in \{s,\ldots,2s\}$ and $AA^T = P+M$ for matrices $P=(p_{XY})$ and $M=(m_{XY})$ such that
\[
p_{XY}=
\left\{
  \begin{array}{ll}
    b_{|X \cup Y|} & \hbox{if $X \neq Y$} \\
    b_s-a_s & \hbox{if $X=Y$}
  \end{array}
\right. \qquad
m_{XY}=
\left\{
  \begin{array}{ll}
    b(X \cup Y) - b_{|X \cup Y|} & \hbox{if $X \neq Y$} \\
    a_s+b(X)-b_s & \hbox{if $X=Y$}
  \end{array}
\right..\]
Furthermore, the following hold.
\begin{itemize}
    \item[(a)]
$P=\sum_{j=0}^{s-1} a_j Q_j^TQ_j$, where $Q_j$ is the $j$-incidence matrix of the incidence structure $(V,\binom{V}{s})$. Hence $P$ is positive semidefinite.
    \item[(b)]
For any $X \in \binom{V}{s}$,
\[\medop \sum_{Y \in \binom{V}{s}\setminus \{X\}} m_{XY} = \medop \sum_{Y \in \binom{V}{s} \setminus \{X\}} \left(b(X \cup Y)-b_{|X \cup Y|}\right)=\left(b(X)-b_s\right)\left(\tbinom{k}{s}-1\right)+d\]
where $d=b_s(\tbinom{k}{s}-1)-\sum_{i=0}^{s-1}\tbinom{s}{i}\tbinom{v-s}{s-i}b_{2s-i}$ is a nonnegative integer.
\end{itemize}
\end{lemma}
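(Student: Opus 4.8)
The plan is to establish the four assertions in the order listed, as the earlier ones support the later ones. The bound $b(Z) \geq b_{|Z|}$ for $Z \subseteq V$ with $|Z| = i \in \{s,\ldots,2s\}$ is immediate from~\eqref{equation:Subset}, which gives $b(Z) \geq C_\lambda(v-i,k-i,t-i)$, together with hypotheses (i) and (ii), which bound $b_i$ above by this same covering number; note $i \leq 2s \leq t$, so all the relevant covering numbers are defined. The decomposition $AA^T = P+M$ is then purely formal: by Lemma~\ref{Lemma:basicFacts}(i) the $(X,Y)$ entry of $AA^T$ is $b(X\cup Y)$, and checking the two cases shows $p_{XY}+m_{XY}=b(X\cup Y)$, with the auxiliary terms $b_{|X\cup Y|}$ (off the diagonal) and $b_s-a_s$ (on it) cancelling by design.

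For part (a) the crucial observation is that the $(X,Y)$ entry of the Gram matrix $Q_j^TQ_j$ counts the $j$-subsets $W$ with $W \subseteq X$ and $W \subseteq Y$, that is, the $j$-subsets of $X \cap Y$, and so equals $\binom{|X\cap Y|}{j}$. Setting $r = |X \cap Y| = 2s-|X\cup Y|$, the $(X,Y)$ entry of $\sum_{j=0}^{s-1} a_j Q_j^TQ_j$ is $\sum_{j=0}^{s-1} a_j \binom{r}{j}$. Substituting the definition of $a_j$, interchanging the order of summation, and applying Lemma~\ref{Lemma:multinomIdent} (with its $\ell$ set to $r$) shows $\sum_{j=0}^{s} a_j\binom{r}{j} = b_{2s-r}$. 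When $X \neq Y$ we have $r < s$, so the $j=s$ term vanishes and the entry equals $b_{2s-r}=b_{|X\cup Y|}=p_{XY}$; when $X=Y$ we have $r=s$, and discarding the $j=s$ term $a_s$ leaves $b_s-a_s=p_{XX}$. Since each $Q_j^TQ_j$ is positive semidefinite and $a_j \geq 0$ by hypothesis (iii), $P$ is a nonnegative combination of positive semidefinite matrices and so is positive semidefinite.

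For part (b) the first equality is simply the definition of $m_{XY}$ off the diagonal. I would evaluate $\sum_{Y} b(X\cup Y)$ by double counting over pairs $(Y,B)$: a block $B$ contributes to the $Y$-term exactly when $X\cup Y \subseteq B$, which forces $X \subseteq B$ and then leaves $\binom{k}{s}-1$ admissible $Y \neq X$ inside $B$, giving $b(X)\left(\binom{k}{s}-1\right)$. For $\sum_{Y} b_{|X\cup Y|}$ I would group the sets $Y$ by $r=|X\cap Y|\in\{0,\ldots,s-1\}$; there are $\binom{s}{r}\binom{v-s}{s-r}$ of them, each contributing $b_{2s-r}$, which yields $\sum_{i=0}^{s-1}\binom{s}{i}\binom{v-s}{s-i}b_{2s-i}$. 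Subtracting and then adding $b_s\left(\binom{k}{s}-1\right)$ regroups the difference into $\left(b(X)-b_s\right)\left(\binom{k}{s}-1\right)+d$ with $d$ exactly as stated, and $d$ is plainly an integer.

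The main obstacle is the nonnegativity of $d$. Here I would reindex by $m=s-i$ to write $d = b_s\left(\binom{k}{s}-1\right) - \sum_{m=1}^{s}\binom{s}{m}\binom{v-s}{m}b_{s+m}$, and then bound each $b_{s+m}$ in terms of $b_s$. Iterating the ratio inequality $b_i \geq \frac{v-i}{k-i}\,b_{i+1}$ from hypothesis (ii) along $i=s,s+1,\ldots,s+m-1$ gives $b_{s+m} \leq \frac{(k-s)(k-s-1)\cdots(k-s-m+1)}{(v-s)(v-s-1)\cdots(v-s-m+1)}\,b_s$, where every denominator is positive since $k > t \geq 2s$. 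Substituting this bound, the factor $\binom{v-s}{m}$ cancels the falling factorial in the denominator and leaves $\binom{k-s}{m}$, so $\sum_{m=1}^{s}\binom{s}{m}\binom{v-s}{m}b_{s+m} \leq b_s\sum_{m=1}^{s}\binom{s}{m}\binom{k-s}{m}$. The Vandermonde identity $\sum_{m=0}^{s}\binom{s}{m}\binom{k-s}{m}=\binom{k}{s}$ then shows the remaining sum equals $\binom{k}{s}-1$, and hence $d \geq 0$. I expect the reindexing and the careful cancellation of falling factorials in this final chain to be the fussiest part, though it is routine once the Vandermonde identity is recognised.
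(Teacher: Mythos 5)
Your proposal is correct and follows essentially the same route as the paper's own proof: the same appeal to \eqref{equation:Subset} with (i) and (ii), the same identification of the $(X,Y)$ entry of $Q_j^TQ_j$ with $\binom{|X\cap Y|}{j}$ combined with Lemma~\ref{Lemma:multinomIdent}, and the same double count plus iterated ratio bound and Vandermonde identity to show $d\geq 0$. The only differences are cosmetic (your reindexing by $m=s-i$; the paper instead absorbs the $j=s$ term by writing $\sum_{j=0}^{s}a_jQ_j^TQ_j=a_sI+P$).
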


\begin{proof}
Let $Z \subseteq V$ with $|Z| \in \{s,\ldots,2s\}$. That $b(Z) \geq b_{|Z|}$ follows because $b_{|Z|} \leq C_\lambda(v-|Z|,k-|Z|,t-|Z|)$ by (i) and (ii) and $C_\lambda(v-|Z|,k-|Z|,t-|Z|) \leq b(Z)$ by \eqref{equation:Subset}. That $AA^T = P+M$ follows immediately from Lemma~\ref{Lemma:basicFacts} (i) and the definitions of $P$ and $M$. Let $\mathcal{V}=\binom{V}{s}$ and $\mathcal{V}_0=\{X \in \mathcal{V}:b(X)=b_s\}$.

We prove (a). Observe that for $j \in \{0,\ldots,s\}$, $Q_j^TQ_j$ is the matrix whose rows and columns are indexed by $\binom{V}{s}$ and whose $(X,Y)$ entry is $\binom{|X \cap Y|}{j}$ for all $X,Y \in \binom{V}{s}$. In particular, $Q_s^TQ_s=I$. Let
\[Q'=\medop\sum_{j=0}^{s}a_j Q_j^TQ_j=a_sI+\medop\sum_{j=0}^{s-1}a_jQ_j^TQ_j.\]
It suffices to show that $Q'=a_sI+P$.

Let $X,Y \in \binom{V}{s}$, let $\ell = |X \cap Y|$, and note that $\ell \leq s$. For $j \in \{0,\ldots,s\}$, the $(X,Y)$ entry of $Q_j^TQ_j$ is $\binom{\ell}{j}$. Thus the $(X,Y)$ entry of $Q'$ is
\[\medop\sum_{j=0}^{s} a_j\tbinom{\ell}{j} = \medop\sum_{j=0}^{s} \medop\sum_{i=0}^{j} (-1)^{i+j}\tbinom{\ell}{j}\tbinom{j}{i}b_{2s-i} = \medop\sum_{i=0}^{s} \medop\sum_{j=i}^{s} (-1)^{i+j}\tbinom{\ell}{j}\tbinom{j}{i}b_{2s-i}.\]
So it follows from Lemma~\ref{Lemma:multinomIdent} that the $(X,Y)$ entry of $Q'$ is $b_{2s-\ell}=b_{|X \cup Y|}$. Thus $Q'=a_sI+P$.

Now we prove (b).  For each $X \in \mathcal{V}$,
\[\medop\sum_{Y \in \mathcal{V}\setminus\{X\}}b(X \cup Y)=b(X)\left(\tbinom{k}{s}-1\right)\]
because each block that contains $X$ contributes $\binom{k}{s}-1$ to this sum. Also for each $X \in \mathcal{V}$,
\[\medop\sum_{Y \in \mathcal{V}\setminus\{X\}}b_{|X \cup Y|}=\medop\sum_{i=0}^{s-1}\tbinom{s}{i}\tbinom{v-s}{s-i}b_{2s-i}\]
because, for each $i \in \{0,\ldots,s-1\}$, $|\{Y:|X \cap Y|=i\}|=\binom{s}{i}\binom{v-s}{s-i}$. Together, these facts imply that (b) holds provided $d$ is nonnegative. By (ii), $b_{i+1} \leq \frac{k-i}{v-i}b_i$ for $i = 2s-1,2s-2,\ldots,s$ and so it can be seen that $b_{2s-i} \leq (\binom{k-s}{s-i}/\binom{v-s}{s-i})b_s$ for $i=s-1,s-2,\ldots,0$. Thus, \[\medop\sum_{i=0}^{s-1}\tbinom{s}{i}\tbinom{v-s}{s-i}b_{2s-i} \leq b_s\medop\sum_{i=0}^{s-1}\tbinom{s}{i}\tbinom{k-s}{s-i} =  b_s(\tbinom{k}{s}-1),\]
and it follows that $d \geq 0$.
\end{proof}

\begin{remark}\label{Remark:bigv}
In many cases condition (ii) of Lemma~\ref{Lemma:setUp} implies condition (iii). Specifically, we claim that if condition (ii) is satisfied then $a_j \geq 0$ for $j \in \{0,\ldots,\min(\lfloor\frac{v}{k}\rfloor,{s})\}$. This means that we can ignore condition (iii) whenever $v \geq sk$. Certainly, $a_0=b_{2s}\geq1$. To see that the rest of our claim is true, fix $j \in \{1,\ldots,\min(\lfloor\frac{v}{k}\rfloor,{s})\}$, and let $\delta=2$ if $j$ is even and $\delta=1$ if $j$ is odd. Then, pairing consecutive terms in the definition of $a_j$, we see that
\[a_j\geq\medop\sum_{i\in\{\delta,\delta+2,\ldots,j\}}\left(\tbinom{j}{i}b_{2s-i}-\tbinom{j}{i-1}b_{2s-i+1}\right).\]
For $i\in\{\delta,\delta+2,\ldots,j\}$, using condition (ii),
\[\tbinom{j}{i}=\tfrac{j-i+1}{i}\tbinom{j}{i-1} \geq \tfrac{1}{j}\tbinom{j}{i-1} \quad\mbox{ and }\quad b_{2s-i} \geq \left\lceil \tfrac{v-2s+i}{k-2s+i} b_{2s-i+1} \right\rceil \geq \tfrac{v}{k} b_{2s-i+1} \geq jb_{2s-i+1},\]
and hence $\tbinom{j}{i}b_{2s-i} \geq \tbinom{j}{i-1}b_{2s-i+1}$. Thus $a_j \geq 0$.
\end{remark}

It follows from Lemma~\ref{Lemma:setUp}(a) that the diagonal entries $b_s-a_s$ of $P$ are at least $a_0=b_{2s}>0$. Hence $b_s>a_s$. This fact will be used several times in later sections. We are now ready to prove Lemma~\ref{Lemma:main}, which forms the basis of all the lower bounds that we establish in this paper.

\begin{lemma}\label{Lemma:main}
Suppose the hypotheses of Lemma~\ref{Lemma:setUp} hold. If there is a subset $\mathcal{S}$ of $\binom{V}{s}$ such that, for each $X \in \mathcal{S}$,
\[\medop \sum_{Y \in \mathcal{S} \setminus \{X\}} \left(b(X \cup Y)-b_{|X \cup Y|}\right) < a_s+b(X)-b_s,\]
then $|\mathcal{B}| \geq |\mathcal{S}|$.
\end{lemma}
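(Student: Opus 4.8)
The plan is to exploit the decomposition $AA^T = P + M$ established in Lemma~\ref{Lemma:setUp}, together with the strategy outlined just before Lemma~\ref{Lemma:multinomIdent}: since $P$ is positive semidefinite, it suffices to exhibit a diagonally dominant principal submatrix of $M$ whose index set is exactly $\mathcal{S}$, and then conclude that the corresponding principal submatrix of $AA^T$ is positive definite, hence full rank.

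Let me write this out. First I would let $M'$ denote the principal submatrix of $M$ whose rows and columns are indexed by the sets in $\mathcal{S}$. The diagonal entry of $M'$ in position $(X,X)$ is $m_{XX} = a_s + b(X) - b_s$ by Lemma~\ref{Lemma:setUp}, and the off-diagonal entry in position $(X,Y)$ is $m_{XY} = b(X \cup Y) - b_{|X \cup Y|}$. The key point is that every such off-diagonal entry is nonnegative: since $X \neq Y$ implies $s < |X \cup Y| \le 2s$, Lemma~\ref{Lemma:setUp} gives $b(X \cup Y) \ge b_{|X \cup Y|}$, so $m_{XY} \ge 0$ and its magnitude is just $m_{XY}$ itself. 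The diagonal entries are also positive: we have $b(X) \ge b_s$ for $X \in \binom{V}{s}$, and $a_s > 0$ need not hold, but the remark after Lemma~\ref{Lemma:setUp} records that $b_s > a_s$ fails to be the relevant sign—rather, what I need is that the diagonal entry $m_{XX} = a_s + b(X) - b_s$ is positive, which follows from the hypothesis of the present lemma since the left-hand sum is nonnegative and strictly less than $m_{XX}$.

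With these sign observations in hand, the diagonal dominance condition for $M'$ reads, for each $X \in \mathcal{S}$,
\[
m_{XX} = a_s + b(X) - b_s > \medop\sum_{Y \in \mathcal{S} \setminus \{X\}} m_{XY} = \medop\sum_{Y \in \mathcal{S} \setminus \{X\}} \bigl(b(X \cup Y) - b_{|X \cup Y|}\bigr),
\]
which is precisely the hypothesis of the lemma. Thus $M'$ is a real diagonally dominant matrix, and hence positive definite by the Gershgorin-based fact quoted in the preliminaries. I would then let $P'$ be the principal submatrix of $P$ on the same index set $\mathcal{S}$; since $P$ is positive semidefinite, so is $P'$. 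The submatrix of $AA^T$ indexed by $\mathcal{S}$ equals $P' + M'$, a sum of a positive semidefinite and a positive definite matrix, hence positive definite and therefore of full rank $|\mathcal{S}|$.

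Finally I would conclude that $\rank(AA^T) \ge |\mathcal{S}|$, since the rank of a matrix is at least the order of any nonsingular principal submatrix, and then invoke Lemma~\ref{Lemma:basicFacts}(ii) to obtain $|\mathcal{B}| \ge \rank(AA^T) \ge |\mathcal{S}|$. I do not anticipate a genuine obstacle here: the entire content has been front-loaded into Lemma~\ref{Lemma:setUp} and the preliminary discussion, so the proof is essentially a matter of assembling the pieces in the right order. The one point requiring mild care is the bookkeeping of signs—confirming that all off-diagonal entries of $M'$ are nonnegative so that the stated inequality really is the diagonal dominance condition, rather than merely an upper bound on a signed sum—but this is immediate from $b(X \cup Y) \ge b_{|X \cup Y|}$.
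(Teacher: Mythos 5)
Your proof is correct and follows essentially the same route as the paper's: decompose $AA^T=P+M$ via Lemma~\ref{Lemma:setUp}, observe that the hypothesis makes the principal submatrix of $M$ indexed by $\mathcal{S}$ diagonally dominant (all entries being nonnegative) and hence positive definite, and combine with the positive semidefiniteness of $P$ and Lemma~\ref{Lemma:basicFacts}(ii). The sign bookkeeping you spell out is exactly the point the paper relies on implicitly, so there is nothing to add.
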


\begin{proof}
By Lemma~\ref{Lemma:basicFacts} (ii), it suffices to show that the principal submatrix of $AA^T$ whose rows and columns are indexed by $\mathcal{S}$ is positive definite and hence full rank.

By Lemma~\ref{Lemma:setUp}, $AA^T$ can be written as the sum of a positive semidefinite matrix $P$ and a matrix $M$ whose $(X,Y)$ entry is the nonnegative integer $b(X \cup Y)-b_{|X \cup Y|}$ for all distinct $X,Y \in \binom{V}{s}$ and whose $(X,X)$ entry is the nonnegative integer $a_s+b(X)-b_{s}$ for all $X \in \binom{V}{s}$. Because every principal submatrix of $P$ is positive semidefinite, it in fact suffices to show that the principal submatrix $M'$ of $M$ whose rows and columns are indexed by $\mathcal{S}$ is positive definite. Given the hypothesis of the lemma that
\[\medop \sum_{Y \in \mathcal{S} \setminus \{X\}} \left(b(X \cup Y)-b_{|X \cup Y|}\right) < a_s+b(X)-b_s,\]
$M'$ is diagonally dominant and hence it is positive definite by the Gershgorin circle theorem (see \cite[p.16-6]{Hog}).
\end{proof}

\section{Basic bound}\label{basicSec}

Here we use Lemma~\ref{Lemma:main} to prove the simplest and most easily stated of our results, and then discuss when it can be usefully applied.

\begin{theorem}\label{Theorem:main}
Suppose the hypotheses of Lemma~\ref{Lemma:setUp} hold and that $d < a_s$. Then
\[C_{\lambda}(v,k,t) \geq \left\lceil\mfrac{\binom{v}{s}(b_s+1)}{\binom{k}{s}+1}\right\rceil.\]
\end{theorem}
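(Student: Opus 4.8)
The plan is to apply Lemma~\ref{Lemma:main} with a carefully chosen index set $\mathcal{S}$, and then to bound $|\mathcal{S}|$ from below by a global incidence count. The natural first guess, $\mathcal{S}=\binom{V}{s}$, fails: by Lemma~\ref{Lemma:setUp}(b) the left-hand side of the inequality in Lemma~\ref{Lemma:main} equals $(b(X)-b_s)(\binom{k}{s}-1)+d$, which need not be smaller than $a_s+b(X)-b_s$ once $b(X)$ exceeds $b_s$. I would therefore take $\mathcal{S}=\mathcal{V}_0$, where $\mathcal{V}_0=\{X\in\binom{V}{s}:b(X)=b_s\}$ is the set of $s$-subsets attaining the minimum multiplicity guaranteed by Lemma~\ref{Lemma:setUp}. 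For $X\in\mathcal{V}_0$ every summand $b(X\cup Y)-b_{|X\cup Y|}$ is nonnegative, so restricting the sum in Lemma~\ref{Lemma:setUp}(b) to $Y\in\mathcal{V}_0\setminus\{X\}$ only decreases it; hence the left-hand side is at most $(b(X)-b_s)(\binom{k}{s}-1)+d=d$, while the right-hand side is $a_s+b(X)-b_s=a_s$. The hypothesis $d<a_s$ then supplies the strict inequality required by Lemma~\ref{Lemma:main}, so $|\mathcal{B}|\geq|\mathcal{V}_0|$.

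It remains to show that $|\mathcal{V}_0|$ is large. Writing $m_0=|\mathcal{V}_0|$ and double counting incidences between blocks and $s$-subsets gives $\sum_{X\in\binom{V}{s}}b(X)=|\mathcal{B}|\binom{k}{s}$, since each block contains exactly $\binom{k}{s}$ of the $s$-subsets. By Lemma~\ref{Lemma:setUp} every $X$ satisfies $b(X)\geq b_s$, and each of the $\binom{v}{s}-m_0$ subsets outside $\mathcal{V}_0$ has $b(X)\geq b_s+1$ by integrality. Thus
\[|\mathcal{B}|\tbinom{k}{s}\geq m_0 b_s+\left(\tbinom{v}{s}-m_0\right)(b_s+1)=\tbinom{v}{s}(b_s+1)-m_0,\]
which rearranges to $m_0\geq\binom{v}{s}(b_s+1)-|\mathcal{B}|\binom{k}{s}$. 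Feeding this into $|\mathcal{B}|\geq m_0$ yields $|\mathcal{B}|(\binom{k}{s}+1)\geq\binom{v}{s}(b_s+1)$, and taking ceilings (as $|\mathcal{B}|$ is an integer) gives the claimed bound.

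The argument is short once Lemmas~\ref{Lemma:setUp} and~\ref{Lemma:main} are in hand, so the real content lies in two ideas rather than in any delicate estimate. The first is recognising that one should not work with all of $\binom{V}{s}$ but only with the $s$-subsets of minimum multiplicity, since these are precisely the rows for which the diagonal dominance of $M$ survives under the assumption $d<a_s$. The main obstacle, turning $|\mathcal{B}|\geq|\mathcal{V}_0|$ into a useful statement, is overcome by the observation that a small $\mathcal{V}_0$ forces many $s$-subsets to carry excess multiplicity, which the double count converts into a lower bound on $|\mathcal{B}|$; the two inequalities then close on each other. I expect the only points needing care to be the direction of the ceiling and the bookkeeping ensuring $b(X)\geq b_s+1$ for $X\notin\mathcal{V}_0$, which relies on the integrality of $b(X)$.
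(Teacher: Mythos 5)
Your proposal is correct and follows essentially the same route as the paper: both take $\mathcal{S}=\mathcal{V}_0$, verify the hypothesis of Lemma~\ref{Lemma:main} via Lemma~\ref{Lemma:setUp}(b) together with $d<a_s$, and then combine $|\mathcal{B}|\geq|\mathcal{V}_0|$ with the double count $\sum_{X}b(X)=|\mathcal{B}|\binom{k}{s}$ to close the bound.
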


\begin{proof}
Let $(V,\mathcal{B})$ be a $t$-$(v,k,\lambda)$ covering. Let $\mathcal{V}=\binom{V}{s}$ and $\mathcal{V}_0=\{X \in \mathcal{V}:b(X)=b_s\}$. Because $d < a_s$, it follows from Lemma~\ref{Lemma:setUp}(b) that we can apply Lemma~\ref{Lemma:main} with $\mathcal{S}=\mathcal{V}_0$ and hence conclude that $|\mathcal{B}| \geq |\mathcal{V}_0|$.

Since each block in $\mathcal{B}$ covers $\binom{k}{s}$ sets in $\mathcal{V}$, we have that $\sum_{X \in \mathcal{V}}b(X)=|\mathcal{B}|\binom{k}{s}$. Thus
\[|\{X \in \mathcal{V}:b(X)>b_s\}| \leq |\mathcal{B}|\tbinom{k}{s}-\tbinom{v}{s}b_s\]
because $b(X) \geq b_s$ for each $X \in \mathcal{V}$. It follows that $|\mathcal{B}| \geq |\mathcal{V}_0| \geq \binom{v}{s}-(|\mathcal{B}|\binom{k}{s}-\binom{v}{s}b_s)$. A simple calculation now establishes that
\[|\mathcal{B}| \geq \mfrac{\binom{v}{s}(b_s+1)}{\binom{k}{s}+1}.\qedhere\]
\end{proof}

It is useless to apply Theorem~\ref{Theorem:main} with $b_s$ chosen to be less than the best known lower bound for
$C_\lambda(v-s,k-s,t-s)$, because the bound of Theorem~\ref{Theorem:main} is always inferior to the bound given by $s$ iterated applications of \eqref{equation:Sch} to $b_s+1$ (note this latter bound is at least $\lceil b_s\binom{v}{s}/\binom{k}{s} \rceil$). Furthermore, from the definitions of $d$ and $a_s$ we have that
\begin{equation}\label{equation:altForm}
a_s-d=\left(\medop\sum_{i=0}^{s-1}\tbinom{s}{i}(\tbinom{v-s}{s-i}+(-1)^{s-i})b_{2s-i}\right)-\left(\tbinom{k}{s}-2\right)b_s,
\end{equation}
which is increasing in $b_{2s-i}$ for each $i \in \{0,\ldots,s-1\}$.
Thus, in the absence of condition (iii) of Lemma~\ref{Lemma:setUp}, it can be seen that when attempting to apply  Theorem~\ref{Theorem:main} we only need consider choosing $b_i$ to be the best known lower bound on $C_\lambda(v-i,k-i,t-i)$ for $i \in \{s,\ldots,2s\}$. Throughout the rest of the paper, we shall refer to this as the natural choice for the $b_i$. Condition (iii) complicates the picture somewhat, but in view of Remark~\ref{Remark:bigv} this is only of concern when $v \leq (s-1)k$ (note that $a_s > d \geq 0$ by our hypotheses and Lemma~\ref{Lemma:setUp}). In many cases the best known lower bounds are all given by the Sch{\"o}nheim bound and in these cases the natural choice of the $b_i$ amounts to taking $b_i=L_\lambda(v-i,k-i,t-i)$ for $i \in \{s,\ldots,2s\}$.

For each of the subsequent lower bounds we establish in this paper (see Theorems~\ref{Theorem:dBig} and \ref{Theorem:smallDImprovements}), we will also show that we only need consider the natural choice for $b_s$. With this choice fixed, the natural choice for the remaining $b_i$ will minimise $d$ and maximise $a_s-d$, by the definition of $d$ and \eqref{equation:altForm}. Considering this and Remark~\ref{Remark:bigv}, we believe that taking the natural choice for the $b_i$ in our theorems will almost always produce the best results.

For the Theorem~\ref{Theorem:main} bound to exceed the bound obtained by $s$ iterated applications of \eqref{equation:Sch} to $b_s$, it must be the case that $b_s < \binom{k}{s}$ (again note the latter bound is at least $\lceil b_s\binom{v}{s}/\binom{k}{s} \rceil$). Furthermore, the other lower bounds we establish in this paper will explicitly require $b_s < \binom{k}{s}$.
We have $b_s < \binom{k}{s}$ only when $v < (\frac{k^t}{s!\lambda})^{1/(t-s)}$ because $\binom{k}{s} \leq \frac{k^s}{s!}$ and
$b_s \geq L_\lambda(v-s,k-s,t-s)\geq \lambda\binom{v-s}{t-s}/\binom{k-s}{t-s} > \lambda(\frac{v}{k})^{t-s}$. So none of the lower bounds of this paper are of use when $v \geq (\frac{k^t}{s!\lambda})^{1/(t-s)}$.

Theorem~\ref{Theorem:main} implies Ray-Chaudhuri and Wilson's \cite{raywil75} generalisation of Fisher's inequality. If there exists a $t$-$(v,k,\lambda)$ design $(V,\mathcal{B})$ with $v \geq k+s$ for some positive integer $s \leq \lfloor\frac{t}{2}\rfloor$, then applying Theorem~\ref{Theorem:main} with $b_i=L_\lambda(v-i,k-i,t-i)=\lambda\binom{v-i}{t-i}/\binom{k-i}{t-i}$ for $i \in \{s,\ldots,2s\}$ we have $C_\lambda(v,k,t) \geq \binom{v}{s}(b_s+1)/(\binom{k}{s}+1)$ (the hypotheses are satisfied because $d=0$ and $a_j=\lambda\binom{v-2s}{k-2s+j}/\binom{v-t}{k-t}$ for $j \in \{0,\ldots,s\}$). But, because $(V,\mathcal{B})$ is a design, it has exactly $\binom{v}{s}b_s/\binom{k}{s}$ blocks. So we can conclude that $\binom{v}{s}b_s/\binom{k}{s} \geq \binom{v}{s}(b_s+1)/(\binom{k}{s}+1)$ which implies $b_s \geq \binom{k}{s}$ and hence that $(V,\mathcal{B})$ has at least $\binom{v}{s}$ blocks.

\section{Infinite families of improvements}\label{Section:exact}

In this section we first give, in Lemma~\ref{Lemma:infFam}, an infinite family of parameter sets for which applying Theorem~\ref{Theorem:main} with $s=2$ yields an improvement over the Sch{\"o}nheim bound. Then we exhibit, in Theorem~\ref{ExactTh2}, an infinite family of parameter sets for which applying Theorem~\ref{Theorem:main} with $s=1$ establishes exact covering numbers.  In this section we will often use the simple observation that, for given $t$, $k$ and $\lambda$, $C_{\lambda}(v,k,t) \leq C_{\lambda}(v',k,t)$ when $v \leq v'$.
	
\begin{lemma}\label{Lemma:infFam}
Let $m \geq 6$ be an integer, and let $v=m^2(m-2)+4$ and $k=m(m-1)+2$. An application of Theorem~\ref{Theorem:main} with $s=2$ establishes that $C(v,k,5) \geq L(v,k,5)+m(m-4)-10$.
\end{lemma}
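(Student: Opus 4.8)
The plan is to apply Theorem~\ref{Theorem:main} with $s=2$, $t=5$, $\lambda=1$ and the natural choice $b_i=L(v-i,k-i,5-i)$ for $i\in\{2,3,4\}$, and then to compare the resulting bound against $L(v,k,5)$. First I would evaluate the natural choice in closed form, exploiting the fact that these parameters have been engineered so that the Sch{\"o}nheim recursion factors cleanly. Since $v-4=m^2(m-2)$ and $k-4=(m-2)(m+1)$, we get $b_4=\lceil m^2/(m+1)\rceil=m$; since $v-3=(m-1)(m^2-m-1)$ and $k-3=m^2-m-1$ we have $(v-3)/(k-3)=m-1$ and hence $b_3=m(m-1)$; and because $b_3=m(m-1)=k-2$, the next Sch{\"o}nheim step is exact, giving $b_2=v-2=m^3-2m^2+2$.

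Next I would verify the hypotheses. Conditions (i) and (ii) of Lemma~\ref{Lemma:setUp} hold because each $b_i$ is the Sch{\"o}nheim bound, so $b_i\leq C(v-i,k-i,5-i)$, and because the chosen values satisfy the Sch{\"o}nheim recursion with equality by construction (in particular $b_4=m=C(v-4,k-4,1)$). As $v>2k=sk$ for $m\geq 6$, Remark~\ref{Remark:bigv} supplies condition (iii) for free; alternatively one checks directly that $a_0=m$, $a_1=m(m-2)$ and $a_2=m^3-4m^2+3m+2$ are all positive. For the extra hypothesis $d<a_s$ of Theorem~\ref{Theorem:main}, the key observation is that the two relations $b_3=\tfrac{k-2}{v-2}b_2$ and $b_4=\tfrac{\binom{k-2}{2}}{\binom{v-2}{2}}b_2$ both hold with equality here, so the Vandermonde estimate in the proof of Lemma~\ref{Lemma:setUp}(b) is tight and $d=0$; since $a_2=m^3-4m^2+3m+2>0$ for $m\geq 6$, we obtain $d=0<a_2$ as required.

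With $b_2=v-2$, Theorem~\ref{Theorem:main} then gives $C(v,k,5)\geq T$, where $T=\lceil\binom{v}{2}(v-1)/(\binom{k}{2}+1)\rceil=\lceil v(v-1)^2/(k^2-k+2)\rceil$. It remains to compare $T$ with $L(v,k,5)$. Because $L(v-2,k-2,3)=v-2$ exactly, only the final two Sch{\"o}nheim steps carry nontrivial ceilings, so bounding each by $\lceil x\rceil<x+1$ yields $L(v,k,5)\leq\frac{v}{k}\big(\frac{(v-1)(v-2)}{k-1}+1\big)+1$. Combining this with $T\geq v(v-1)^2/(k^2-k+2)$ reduces the claim $T\geq L(v,k,5)+m(m-4)-10$ to the single inequality
\[\frac{v(v-1)^2}{k^2-k+2}-\frac{v(v-1)(v-2)}{k(k-1)}-\frac{v}{k}-1\geq m(m-4)-10,\]
which, after substituting $v=m^3-2m^2+4$ and $k=m^2-m+2$, becomes a rational inequality in $m$ that I would clear of denominators and verify for all integers $m\geq 6$.

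I expect the main obstacle to be this final comparison: putting $L(v,k,5)$ into a form amenable to bounding and then discharging the resulting polynomial inequality for every $m\geq 6$. The leading term of $T-L(v,k,5)$ is $m(m-4)+O(1)$, so the target $m(m-4)-10$ leaves ample slack, and the crude per-ceiling loss is far smaller than the allowed margin; the verification is therefore not delicate, but it must be carried out with care so as to confirm the inequality holds for all $m\geq 6$ and not merely at leading order.
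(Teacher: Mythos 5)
Your setup coincides with the paper's: the same natural choice $b_4=m$, $b_3=m(m-1)$, $b_2=v-2$, the same values $a_0=m$, $a_1=m(m-2)$, $a_2=m^3-4m^2+3m+2$, and the same conclusion $d=0$ (your derivation of $d=0$ from the two equalities $b_3=\tfrac{k-2}{v-2}b_2$ and $b_4=\big(\tbinom{k-2}{2}/\tbinom{v-2}{2}\big)b_2$ is a clean way to see it). The genuine gap is in the final comparison with $L(v,k,5)$, and specifically in your assertion that the target $m(m-4)-10$ ``leaves ample slack'' over the per-ceiling losses. It does not: for $m\geq 14$ one finds $L(v,k,5)=m^5-4m^4+20m^2-10m-45$ exactly, while the Theorem~\ref{Theorem:main} bound is $m^5-4m^4+21m^2-14m-55$, so the claimed improvement $m(m-4)-10$ is \emph{exactly} the difference between the two quantities --- the slack is essentially zero, not $\Theta(m^2)$ or even $\Theta(1)$ beyond what is claimed.

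Your upper bound $L(v,k,5)\leq\tfrac{v}{k}\big(\tfrac{(v-1)(v-2)}{k-1}+1\big)+1$ therefore gives away too much: the inner ``$+1$'' gets multiplied by $\tfrac{v}{k}\approx m-1$, an error of order $m$ that swamps the available margin. Asymptotically your displayed inequality reads $m^2-5m+O(1)\geq m^2-4m-10$, which fails for all large $m$; for instance at $m=20$ its left side is approximately $301.4$ while the right side is $310$. The repair is what the paper actually does: evaluate $L(v,k,5)=\big\lceil\tfrac{v}{k}\lceil\tfrac{(v-1)(v-2)}{k-1}\rceil\big\rceil$ \emph{exactly}, by carrying out the two polynomial divisions with explicit remainders and determining the precise value of each ceiling (this works for $m\geq 14$), and then check the finitely many cases $6\leq m\leq 13$ directly. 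With that replacement the rest of your argument goes through.
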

	
\begin{proof}
Let $\ell_i=L(v-i,k-i,t-i)$ for $i=4,3,2$. We can successively calculate
\begin{align*}
  \ell_4 &= \lceil\tfrac{v-4}{k-4}\rceil= \lceil m-1+\tfrac{m-2}{m(m-1)-2}\rceil=m \\
  \ell_3 &= \lceil\tfrac{v-3}{k-3} \ell_4\rceil=m(m-1) \\
  \ell_2 &= \lceil\tfrac{v-2}{k-2} \ell_3\rceil=m^2(m-2)+2.
\end{align*}

We will apply Theorem~\ref{Theorem:main} with $s=2$ and $b_i=\ell_i$ for $i=4,3,2$. Routine calculations show that, in the terminology of Lemma~\ref{Lemma:setUp}, $a_0=m$, $a_1=m(m-2)$, $a_2=m^3-4m^2+3m+2$, and $d=0$. Using this, and recalling that $m \geq 6$, it can be seen that the hypotheses of Theorem~\ref{Theorem:main} are satisfied and hence
\[C(v,k,5) \geq \left\lceil\tfrac{v(v-1)}{k(k-1)+2}(\ell_2+1)\right\rceil.\]
This implies that $C(v,k,5) \geq m^5 - 4m^4 + 21m^2 - 14m - 55$.

On the other hand,
\[L(v,k,5)=\lceil\tfrac{v}{k}\lceil\tfrac{v-1}{k-1}\ell_2\rceil\rceil\]
and for $m \geq 14$ we can calculate that this is equal to $m^5 - 4m^4 + 20m^2 - 10m - 45$. Thus it can be seen that the lemma holds for $m \geq 14$, and it is routine to check it holds for $6 \leq m \leq 13$.
\end{proof}

Further routine calculations establish that, for $v$ and $k$ as in Lemma~\ref{Lemma:infFam}, neither the result of Mills and Mullin \cite{MilMul} nor the results of this paper (including those in Sections~\ref{genSec} and \ref{impSec}) give improvements over the Sch{\"o}nheim bound for the parameter sets $C(v-1,k-1,4)$, $C(v-2,k-2,3)$ or $C(v-3,k-3,2)$. We believe that, in general, no bound better than the Sch{\"o}nheim bound was previously known for this family of parameter sets. Since $d=0$ in our application of Theorem~\ref{Theorem:main}, we could make a slight further improvement to this result by instead applying Theorem~\ref{Theorem:smallDImprovements}(a) below.

We now move on to show that Theorem~\ref{Theorem:main} with $s=1$ can be applied to establish that certain coverings constructed from affine planes are optimal, and thus obtain a family of exact covering numbers.

Let $q$ be a prime power. It is well known (see \cite{gordon95}, for example) that if we take $V$ to be the $q^t$ points of the affine geometry $AG(t,q)$ and $\B$ to be the set of its $(t-1)$-flats, then $(V,\B)$ is a $t$-$(q^t,q^{t-1},1)$ covering with $q(\frac{q^t-1}{q-1})$ blocks. Further, it is straightforward to calculate that $L(q^t,q^{t-1},t)=q(\frac{q^t-1}{q-1})$ and hence  $C(q^t,q^{t-1},t) = q(\frac{q^t-1}{q-1})$. The following lemma is based on a well-known ``blow up'' construction for coverings.

\begin{lemma}
	\label{ExactLem}
Let $m$, $t$ and $q$ be positive integers such that $q$ is a prime power. Then $C(v,mq^{t-1},t) \leq q(\frac{q^t-1}{q-1})$ for each $v \leq mq^t$.
\end{lemma}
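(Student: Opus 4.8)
The plan is to establish the bound at the extremal value $v=mq^t$ and then invoke the monotonicity observation $C(v,k,t)\leq C(v',k,t)$ for $v\leq v'$ recorded at the start of this section. Thus it suffices to construct a $t$-$(mq^t,mq^{t-1},1)$ covering with exactly $q(\frac{q^t-1}{q-1})$ blocks, after which $C(v,mq^{t-1},t)\leq C(mq^t,mq^{t-1},t)\leq q(\frac{q^t-1}{q-1})$ for every $v\leq mq^t$.

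To build this covering I would blow up the affine covering $(V,\B)$ described immediately above, where $V$ is the point set of $AG(t,q)$ and $\B$ its set of $(t-1)$-flats. Replace each point $x\in V$ by a cluster $C_x$ of $m$ new points, so the full point set is $V^{*}=\bigcup_{x\in V}C_x$ with $|V^{*}|=mq^t$. For each flat $B\in\B$ form the block $B^{*}=\bigcup_{x\in B}C_x$, and let $\B^{*}=\{B^{*}:B\in\B\}$. Since each $(t-1)$-flat contains $q^{t-1}$ points, each block $B^{*}$ has exactly $mq^{t-1}$ points, and the number of blocks is unchanged, $|\B^{*}|=|\B|=q(\frac{q^t-1}{q-1})$.

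It remains to check that $(V^{*},\B^{*})$ is a $t$-covering. Given a $t$-subset $T\subseteq V^{*}$, let $\{x_1,\ldots,x_r\}$ be the distinct points of $V$ whose clusters meet $T$, so $r\leq t$. The key point is that any set of at most $t$ points of $AG(t,q)$ lies in a common $(t-1)$-flat: extend $\{x_1,\ldots,x_r\}$ to a $t$-subset of $V$ (possible since $|V|=q^t\geq t$), which lies in some flat $H$ because $(V,\B)$ is a $t$-covering, whence $\{x_1,\ldots,x_r\}\subseteq H$ as well. The blown-up block $H^{*}$ then contains every cluster $C_{x_i}$ and hence contains $T$. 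Thus every $t$-subset of $V^{*}$ is covered, so $(V^{*},\B^{*})$ is a $t$-$(mq^t,mq^{t-1},1)$ covering of the required size.

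The only delicate step is this coverage check, and specifically the possibility that several points of $T$ lie in the same cluster, so that $T$ projects to strictly fewer than $t$ distinct points of $AG(t,q)$; handling this is precisely why we need the statement that \emph{every} set of at most $t$ points—not merely every $t$-subset—lies in a flat. Everything else is routine bookkeeping about block sizes and block counts.
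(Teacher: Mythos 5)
Your proof is correct and follows essentially the same route as the paper: blow up each point of the $AG(t,q)$ covering into a cluster of $m$ points (the paper writes this as $V=U\times M$ and $\mathcal{B}=\{A\times M:A\in\mathcal{A}\}$) and then apply the monotonicity of $C(v,k,t)$ in $v$. The coverage verification you spell out, including the case where $T$ meets fewer than $t$ clusters, is exactly the detail the paper leaves implicit.
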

\begin{proof}
Let $(U,\mathcal{A})$ be the $t$-$(q^t,q^{t-1},1)$ covering with $q(\frac{q^t-1}{q-1})$ blocks obtained from the $(t-1)$-flats of $AG(t,q)$. Let $M$ be a set of $m$ elements, let $V = U \times M$ and let $\mathcal{B}=\{A \times M: A \in \mathcal{A}\}$. Then $(V,\mathcal{B})$ is an $(mq^t,mq^{t-1},1)$-covering with $q(\frac{q^t-1}{q-1})$ blocks. The result now follows because $C(v-1,k,t) \leq C(v,k,t)$ for any parameter set $(v,k,t)$.
\end{proof}

Next we determine the value of the Sch{\"o}nheim bound in the cases we are concerned with.

\begin{lemma}\label{ExactLem2}
Let $v$, $m$, $q$ and $t$ be positive integers such that $q$ is a prime power, $m \geq 2q+2$, $2 \leq t < mq^{t-1}$, and $mq^t-2q+3 \leq v  \leq mq^t$. Let $\ell_t=1$ and let $\ell_i=L(v-i,mq^{t-1}-i,t-i)$ for $i = t-1,t-2,\ldots,0$. Then
\begin{itemize}
    \item[(i)]
$\ell_i=\frac{q^{t-i+1}-1}{q-1}$ for $i = t-1,t-2,\ldots,0$;
    \item[(ii)]
$\ell_1=
\left\{
  \begin{array}{ll}
    \frac{q^{t}-1}{q-1} & \hbox{if $mq^t-q+2 \leq v \leq mq^t$} \\[1mm]
    q(\frac{q^{t-1}-1}{q-1}) & \hbox{if $mq^t-2q+3 \leq v \leq mq^t-q+1$;}
  \end{array}
\right.$
    \item[(iii)]
$\ell_0=
\left\{
  \begin{array}{ll}
    q(\frac{q^{t}-1}{q-1}) & \hbox{if $mq^t-q+2 \leq v \leq mq^t$} \\[1mm]
    q^2(\frac{q^{t-1}-1}{q-1}) & \hbox{if $mq^t-2q+3 \leq v \leq mq^t-q+1$.}
  \end{array}
\right.$
\end{itemize}
\end{lemma}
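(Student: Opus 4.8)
The plan is to unwind the recursive definition of the Sch\"onheim bound. Set $k=mq^{t-1}$, so that $qk=mq^t$. Since $C_\lambda(v',k',0)=\lambda=1$, the definition of $L$ gives the recursion $\ell_i=\lceil\frac{v-i}{k-i}\,\ell_{i+1}\rceil$ for $i\in\{t-1,t-2,\ldots,0\}$ with base case $\ell_t=1$; the hypothesis $t<mq^{t-1}=k$ ensures every denominator $k-i$ is positive. The identity $\frac{q^{t-i+1}-1}{q-1}=q\cdot\frac{q^{t-i}-1}{q-1}+1$ shows that the values in (i) are generated by the rule ``multiply by $q$ and add $1$'', so I would prove (i) by a downward induction in which each ceiling evaluates to $q\ell_{i+1}+1$. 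With $\ell_{i+1}$ an integer this happens exactly when $q\ell_{i+1}<\frac{v-i}{k-i}\ell_{i+1}\leq q\ell_{i+1}+1$; the left inequality simplifies to $v>qk-(q-1)i$, and the right one, using the inductive value $(q-1)\ell_{i+1}=q^{t-i}-1$, to $i\leq mq^{i-1}$.

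For $i\geq2$ both inequalities hold for every $v$ in the stated range. The left one holds because $qk-(q-1)i\leq qk-2q+2<qk-2q+3\leq v$, and the right one because the quantity $mq^{i-1}-i$ is increasing in $i$ (its increment is $mq^{i-1}(q-1)-1>0$) and is nonnegative already at $i=2$, where $i\leq mq^{i-1}$ reads $2\leq mq$. Starting from $\ell_t=1$ and inducting downward then gives $\ell_i=\frac{q^{t-i+1}-1}{q-1}$ for $i\in\{t-1,\ldots,2\}$, which is (i); in particular $\ell_2=\frac{q^{t-1}-1}{q-1}$.

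The two cases in (ii) and (iii) come precisely from the last two steps, where the left inequality $v>qk-(q-1)i$ is no longer forced. For $i=1$ it reads $v>mq^t-q+1$: in the upper range $mq^t-q+2\leq v\leq mq^t$ the ceiling again gains its $+1$, giving $\ell_1=q\ell_2+1=\frac{q^t-1}{q-1}$, while for $mq^t-2q+3\leq v\leq mq^t-q+1$ it does not, giving $\ell_1=q\ell_2=q\,\frac{q^{t-1}-1}{q-1}$; this is (ii). For $i=0$ the inequality reads $v>mq^t$, which never holds, so $\ell_0=q\ell_1$ in both ranges, and substituting the two values of $\ell_1$ yields (iii). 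It remains to confirm that in these last steps the ceiling attains the claimed value rather than falling one unit short; when no $+1$ is added this amounts to checking $\frac{v-i}{k-i}\ell_{i+1}>q\ell_{i+1}-1$. The hardest of these is the $i=0$ lower-range bound, which reduces to $k>(2q-3)\ell_1$ and is exactly where the hypothesis $m\geq2q+2$ is used. I expect this single inequality to be the main (if still routine) obstacle, the analogous ``reaching'' and ``no overshoot'' checks for the other sub-cases following from the weaker bounds $m>q$ and $m\geq1$.
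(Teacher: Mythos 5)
Your proposal is correct and follows essentially the same route as the paper: both unwind the Sch\"onheim recursion $\ell_i=\lceil\frac{v-i}{k-i}\ell_{i+1}\rceil$, show each step gives $q\ell_{i+1}+1$ for $i\geq 2$ via the two inequalities you identify (the paper phrases this by writing $v=mq^t-q+1+c$ and isolating the residual ceiling term $\lceil((i-1)(q-1)+c)\ell_{i+1}/(mq^{t-1}-i)\rceil$), and then split into cases at $i=1$ and $i=0$, with the hypothesis $m\geq 2q+2$ entering only in the final ``no undershoot'' check for $\ell_0$ exactly as you predict.
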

\begin{proof}
Let $c$ be the integer such that $v=mq^t-q+1+c$. By definition, for $i = t-1,t-2,\ldots,0$,
\begin{equation}\label{elleq}
\ell_i = \left\lceil \mfrac{(mq^t-q+1+c-i)\ell_{i+1}}{mq^{t-1}-i}\right\rceil=  q\ell_{i+1}+ \left\lceil\mfrac{((i-1)(q-1)+c)\ell_{i+1}}{mq^{t-1}-i}\right\rceil.
\end{equation}
Since $c \in \{-q+2,\ldots,q-1\}$, \eqref{elleq} implies that $\ell_i=q\ell_{i+1}+1$ for $i \geq 2$, provided $\ell_{i+1} \leq \frac{mq^{t-1}-i}{i(q-1)}$. Using this fact, it is easy to prove (i) by induction on $i$. In particular, we have $\ell_2=\frac{q^{t-1}-1}{q-1}$, and applying \eqref{elleq} once more establishes (ii). Applying \eqref{elleq} one final time using (ii) and the hypothesis $m \geq 2q+2$ establishes (iii).
\end{proof}	

Together, Lemmas~\ref{ExactLem} and~\ref{ExactLem2} establish the known result that, under the hypotheses of Lemma~\ref{ExactLem2}, $C(v,mq^{t-1},t) = q(\frac{q^t-1}{q-1})$ for $v \in \{mq^t-q+2,\ldots,mq^t\}$. By applying Theorem~\ref{Theorem:main} with $s=1$ we can strengthen this result to cover some cases where $v \leq mq^t-q+1$.

\begin{theorem}\label{ExactTh2}
Let $m$, $q$ and $t$ be positive integers such that $q$ is a prime power, $m \geq 2q+2$ and $2 \leq t < mq^{t-1}$.
Then $C(v,mq^{t-1},t) = q(\frac{q^t-1}{q-1})$ for each integer $v$ such that
\[mq^t-q+1-z \leq v \leq mq^t \quad\mbox{ where }\quad z=\min\left(q-2,\left\lfloor \mfrac{m(q-1)q^{t-1}}{q^t - 1}\right\rfloor - 2q + 1\right).\]
\end{theorem}

\begin{proof}
Note that $z \geq 0$ because $m \geq 2q+2$. Let $v'=mq^t-q+1-z$. It suffices to show that $C(v',mq^{t-1},t) \geq q(\frac{q^t-1}{q-1})$, because then, for each integer $v$ such that $v' \leq v \leq mq^t$, we have
\[q\left(\mfrac{q^t-1}{q-1}\right) \leq C(v',mq^{t-1},t) \leq C(v,mq^{t-1},t) \leq C(mq^t,mq^{t-1},t) \leq q\left(\mfrac{q^t-1}{q-1}\right),\]
where the final inequality follows from Lemma~\ref{ExactLem}.

For $i \in \{0,1,2\}$, let $\ell_i=L(v'-i,mq^{t-1}-i,t-i)$. By Lemma~\ref{ExactLem2}, $\ell_1=q(\frac{q^{t-1}-1}{q-1})$ and $\ell_2=\frac{q^{t-1}-1}{q-1}$. To bound $C(v',mq^{t-1},t)$ below, we will apply Theorem~\ref{Theorem:main} with $s=1$, $b_1=\ell_1$ and $b_2=\ell_2$. Obviously this choice satisfies hypotheses (i) and (ii) of Lemma~\ref{Lemma:setUp}. Because $v' \geq mq^t-2q+3$, a simple calculation establishes that $\ell_1(mq^{t-1}-2) < \ell_2(v'-2)$ and thus $d<a_1$ (because $d \geq 0$, this also implies that $a_1 \geq 0$ and that hypothesis (iii) of Lemma~\ref{Lemma:setUp} holds). So, by Theorem~\ref{Theorem:main}, we have
\[C(v,k,t) \geq \left\lceil\mfrac{v'(\ell_1+1)}{mq^{t-1}+1}\right\rceil=q(\ell_1+1)-\left\lfloor \mfrac{(2q+z-1)(\ell_1+1)}{mq^{t-1}+1} \right\rfloor.\]
A routine calculation shows that the second upper bound on $z$ in our hypotheses is equivalent to $(2q+z-1)(\ell_1+1) \leq mq^{t-1}$ and hence $C(v,k,t) \geq q(\ell_1+1)$. Observing that $q(\ell_1+1)=q(\frac{q^t-1}{q-1})$ completes the proof.
\end{proof}

\begin{corollary}
Let $m$, $q$ and $t$ be positive integers such that $q$ is a prime power, $m \geq 3q$ and $2 \leq t < mq^{t-1}$. Then $C(v,mq^{t-1},t) = q(\frac{q^t-1}{q-1})$ for each integer $v$ such that $mq^t-2q+3 \leq v \leq mq^t$.
\end{corollary}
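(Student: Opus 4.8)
The plan is to deduce the corollary directly from Theorem~\ref{ExactTh2}, of which it is essentially a clean specialisation. First I would verify that the corollary's hypotheses imply those of that theorem: since $q$ is a prime power we have $q \geq 2$, so $m \geq 3q$ yields $m \geq 2q+2$, while the conditions $2 \leq t < mq^{t-1}$ are identical in both statements. Hence Theorem~\ref{ExactTh2} applies under the corollary's assumptions.

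Given this, it suffices to show that the stronger bound $m \geq 3q$ forces the quantity
\[z=\min\left(q-2,\left\lfloor \frac{m(q-1)q^{t-1}}{q^t - 1}\right\rfloor - 2q + 1\right)\]
appearing in Theorem~\ref{ExactTh2} to equal $q-2$. Indeed, if $z=q-2$ then the lower endpoint $mq^t-q+1-z$ of the range in that theorem simplifies to exactly $mq^t-2q+3$, which is precisely the lower endpoint claimed in the corollary; the upper endpoint $mq^t$ and the asserted value $q(\frac{q^t-1}{q-1})$ for $C(v,mq^{t-1},t)$ then transfer verbatim.

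To establish $z=q-2$, I would show that the second argument of the minimum is at least $q-2$, so that the minimum is attained by its first argument. From $m \geq 3q$ we obtain $mq^{t-1} \geq 3q^t > 3(q^t-1)$; dividing by $q^t-1>0$ and multiplying by $q-1>0$ gives $m(q-1)q^{t-1}/(q^t-1) \geq 3(q-1)=3q-3$. Since $3q-3$ is an integer, it follows that $\lfloor m(q-1)q^{t-1}/(q^t-1) \rfloor \geq 3q-3$, whence the second argument of the minimum is at least $(3q-3)-2q+1=q-2$. Therefore $z=q-2$, as needed.

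I do not anticipate any genuine obstacle here, since the argument is a short deduction from the previous theorem together with one elementary inequality. The only step meriting care is the passage from the real inequality $m(q-1)q^{t-1}/(q^t-1) \geq 3q-3$ to the corresponding bound on the floor, which is legitimate precisely because the quantity inside the floor is bounded below by the \emph{integer} $3q-3$.
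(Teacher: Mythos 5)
Your proposal is correct and takes exactly the same route as the paper, which simply observes that $m \geq 3q$ forces $z=q-2$ in Theorem~\ref{ExactTh2}; you have merely spelled out the elementary floor-function verification that the paper leaves implicit.
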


\begin{proof}
This follows by observing that, in Theorem~\ref{ExactTh2}, $z=q-2$ if $m \geq 3q$.
\end{proof}

\section{Bounds for the case \texorpdfstring{$d \geq a_s$}{d>=as}}\label{genSec}

Using the terminology of Lemma~\ref{Lemma:setUp}, Theorem~\ref{Theorem:main} applies only when $d<a_s$. In this section we will establish a bound that can be applied when $d \geq a_s$. For a multigraph $G$ and a subset $S$ of $V(G)$, let $G[S]$ denote the sub-multigraph of $G$ induced by $S$. In this section and the next, we will make use of the notion of an $n$-independent set in a multigraph $G$, which is defined as a subset $S$ of $V(G)$ such that $G[S]$ has maximum degree strictly less than $n$. Setting $n=1$ recovers the usual notion of an independent set. Let $\mu_G(xy)$ denote the number of edges between vertices $x$ and $y$ in a multigraph $G$.

If $M$ is the matrix defined in Lemma~\ref{Lemma:setUp} and $G$ is the multigraph whose adjacency matrix agrees with $M$ in its off-diagonal entries, then an $n$-independent set in $G$ corresponds to a principal submatrix of $M$ in which the off-diagonal entries in each row sum to less than $n$. This allows us to use results that guarantee an $n$-independent set in a multigraph to find the diagonally dominant principal submatrix of $M$ that we require. In particular we will use the following result of Caro and Tuza \cite{CarTuz}.

\begin{theorem}[\cite{CarTuz}]\label{Theorem:CT}
Let $n$ be a positive integer and let $G$ be a multigraph. There is an $n$-independent set in $G$ of size at least $\lceil\sum_{u \in V(G)} f_n(\deg_G(u))\rceil$ where
$$f_n(x)=\left\{
         \begin{array}{ll}
           1-\tfrac{x}{2n}, & \hbox{if $x \leq n$;} \\
           \tfrac{n+1}{2(x+1)}, & \hbox{if $x \geq n$.}
         \end{array}
       \right.$$
\end{theorem}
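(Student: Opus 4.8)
The plan is to prove Theorem~\ref{Theorem:CT} by the probabilistic method, generalising the random-permutation argument that yields the Caro--Wei bound in the case $n=1$ (where $f_1(x)=1-\frac{x}{2}$ for $x\leq1$ and $f_1(x)=\frac{1}{x+1}$ for $x\geq1$ recovers $\sum_{u}\frac{1}{\deg_G(u)+1}$). I would fix a uniformly random linear order $\prec$ on $V(G)$ and process the vertices in this order, greedily building a set $S$: when a vertex $v$ is reached, add it to $S$ precisely when doing so keeps $S$ an $n$-independent set, that is, when $v$ currently has fewer than $n$ neighbours already in $S$ and adding $v$ would not raise any such neighbour's induced degree to $n$ or more. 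By construction $S$ is $n$-independent, so it suffices to show $\mathbb{E}[|S|]\geq\sum_{u\in V(G)}f_n(\deg_G(u))$ and then invoke linearity of expectation together with the fact that some outcome attains at least the expectation, rounding up since $|S|$ is an integer.

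The key step is to bound $\Pr[v\in S]$ below by $f_n(\deg_G(v))$ for each vertex $v$, via a local analysis on $\{v\}\cup N(v)$. The first-order event governing acceptance is how many neighbours of $v$ precede it in $\prec$: the probability that $v$ lies among the first $n$ vertices of $\{v\}\cup N(v)$ is exactly $\min(1,\frac{n}{\deg_G(v)+1})$, and comparing this with $f_n$ explains the two regimes, with the crossover at $\deg_G(v)=n$ matching the common value $f_n(n)=\frac{1}{2}$. I would verify the bound separately in the low-degree regime $\deg_G(v)\leq n$, where $f_n(x)=1-\frac{x}{2n}$, and in the high-degree regime $\deg_G(v)\geq n$, where $f_n(x)=\frac{n+1}{2(x+1)}$; the factor $\frac{n+1}{2}$, which is roughly half the naive local probability $\frac{n}{x+1}$, should reflect the loss incurred when one must also account for the mutual constraints among $v$'s preceding neighbours.

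The main obstacle is that greedy acceptance of $v$ is \emph{not} a purely local event: whether a preceding neighbour $w$ already lies in $S$, and whether $w$ is already saturated with $n-1$ neighbours in $S$, depends on the entire history of the process, so $\Pr[v\in S]$ does not factor cleanly. The simpler rule of accepting $v$ whenever fewer than $n$ of its neighbours precede it fails outright to produce an $n$-independent set---a star shows that every vertex can be accepted while the centre has unbounded induced degree---so one genuinely must track the saturation of earlier neighbours, and it is precisely the careful bookkeeping needed to control this dependence while still matching the explicit piecewise $f_n$ that forms the substance of the argument. An alternative, and perhaps more robust, route is induction on $|V(G)|$: delete a vertex $u$ of extremal degree, relate the $n$-independence number of $G$ to that of $G-u$, and choose $f_n$ so that removing $u$ decreases $\sum_{v}f_n(\deg_G(v))$ by at most the contribution credited to $u$. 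In that approach the crux is establishing the required deletion recurrence for $f_n$, with the threshold $x=n$ once more the delicate point.
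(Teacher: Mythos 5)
The paper offers no proof of this statement at all: it is imported verbatim from Caro and Tuza \cite{CarTuz}, so there is no internal argument to compare yours against. Judged on its own terms, your proposal is an outline with the decisive step missing rather than a proof. The entire content of the theorem is the inequality $\Pr[v\in S]\geq f_n(\deg_G(v))$ for the greedy set $S$ built along a random order, and you never establish it; indeed you correctly identify the obstruction (acceptance of $v$ depends on the full history through the saturation of earlier neighbours, so the event is not determined by the restriction of $\prec$ to $\{v\}\cup N(v)$) and then state that overcoming it ``forms the substance of the argument'' without supplying that substance. The local computation you do carry out --- that $v$ is among the first $n$ elements of $\{v\}\cup N(v)$ with probability $\min(1,\tfrac{n}{\deg_G(v)+1})$ --- does not bound $\Pr[v\in S]$ in either direction, and the heuristic that the factor $\tfrac{n+1}{2}$ ``reflects the loss from mutual constraints'' is not an estimate. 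A further, smaller issue: $G$ is a multigraph, so $\deg_G(v)$ counts edge multiplicities and generally exceeds $|N(v)|$; your analysis on ``the first $n$ vertices of $\{v\}\cup N(v)$'' silently identifies the two, and the $n$-independence condition itself is a bound on $\sum_{w\in S}\mu_G(vw)$, not on the number of neighbours in $S$.

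Your fallback route --- induction on $|V(G)|$ via a deletion recurrence for $f_n$ --- is essentially the strategy of Caro and Tuza's original proof, so it is the right direction, but as written it too is only a name for the proof rather than the proof: the ``required deletion recurrence,'' i.e.\ the verification that removing a suitably chosen vertex (typically one of maximum degree, together with an accounting of how its removal lowers the degrees and hence raises the $f_n$-values of its neighbours) decreases $\sum_u f_n(\deg_G(u))$ by at most $1$ less than the gain in independence number, is exactly where the convexity and monotonicity properties of the piecewise function $f_n$ and the threshold at $x=n$ must be checked, and none of that is done. To close the gap you would need either to carry out that induction in full or to find a correct conditioning argument making the random-order analysis rigorous; as it stands, neither branch of the proposal proves the theorem.
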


We next prove a technical lemma that enables us to deduce bounds of a specific form that we denote by $CB_{(v,k,\lambda;s)}(\alpha,\beta)$. We will state the bounds in this section and the next in terms of this notation. Observe that the bound of Theorem~\ref{Theorem:main} is $CB_{(v,k,\lambda;s)}(1,0)$.

\begin{lemma}\label{Lemma:CBBound}
Let $s$ and $b_s$ be positive integers and let $\alpha$ and $\beta$ be nonnegative real numbers such that $\alpha \geq 2\beta$. Suppose that any $t$-$(v,k,\lambda)$ covering $(V,\mathcal{B})$ has $b(X) \geq b_s$ for each $X \in \binom{V}{s}$, and $|\mathcal{B}| \geq \alpha|\mathcal{V}_0|+\beta|\mathcal{V}_1|$ where $\mathcal{V}_i=\{X \in \binom{V}{s}:b(X)=b_s+i\}$ for $i \in \{0,1\}$. Then
\[C_{\lambda}(v,k,t) \geq \left\lceil CB_{(v,k,\lambda;s)}(\alpha,\beta) \right\rceil \quad \hbox{where} \quad CB_{(v,k,\lambda;s)}(\alpha,\beta) = \mfrac{b_s(\alpha-\beta)\binom{v}{s}+\alpha \binom{v}{s}}{(\alpha-\beta)\binom{k}{s}+1}.\]
\end{lemma}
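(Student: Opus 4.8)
The plan is to prove this by a double-counting argument that links the hypothesised inequality $|\mathcal{B}| \geq \alpha|\mathcal{V}_0|+\beta|\mathcal{V}_1|$ to the total number of incidences between blocks and $s$-subsets. Let $(V,\mathcal{B})$ be any $t$-$(v,k,\lambda)$ covering and write $\mathcal{V}=\binom{V}{s}$. First I would record the basic counting identity: since each of the $|\mathcal{B}|$ blocks covers exactly $\binom{k}{s}$ members of $\mathcal{V}$,
\[\sum_{X\in\mathcal{V}}b(X)=|\mathcal{B}|\tbinom{k}{s}.\]
Because $b(X)\geq b_s$ for every $X\in\mathcal{V}$ by hypothesis, subtracting $b_s$ from each term shows that the total excess $E:=\sum_{X\in\mathcal{V}}(b(X)-b_s)$ equals $|\mathcal{B}|\binom{k}{s}-\binom{v}{s}b_s$, which is a nonnegative quantity.

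Next I would stratify $\mathcal{V}$ according to the value of $b(X)$. Writing $n_i=|\mathcal{V}_i|$ for $i\in\{0,1\}$, let $S$ be the number of $X\in\mathcal{V}$ with $b(X)\geq b_s+2$ and let $T$ be the excess these sets contribute, so that $T=\sum_{X:\,b(X)\geq b_s+2}(b(X)-b_s)$. Partitioning $\mathcal{V}$ by excess gives the two relations $n_0+n_1+S=\binom{v}{s}$ and $n_1+T=E$. The crucial observation, and exactly the point at which the hypothesis $\alpha\geq2\beta$ is used, is that each set counted by $S$ contributes at least $2$ to the excess, so that $S\leq\tfrac{1}{2}T$.

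I would then substitute $n_0=\binom{v}{s}-n_1-S$ and $n_1=E-T$ into $\alpha n_0+\beta n_1$ and collect terms to obtain
\[\alpha n_0+\beta n_1=\alpha\tbinom{v}{s}-(\alpha-\beta)E-\alpha S+(\alpha-\beta)T.\]
Using $S\leq\tfrac{1}{2}T$ together with $\alpha\geq2\beta$ shows that $-\alpha S+(\alpha-\beta)T\geq\tfrac{1}{2}(\alpha-2\beta)T\geq0$, whence $\alpha n_0+\beta n_1\geq\alpha\binom{v}{s}-(\alpha-\beta)E$. Combining this with the hypothesised inequality $|\mathcal{B}|\geq\alpha n_0+\beta n_1$ and the formula $E=|\mathcal{B}|\binom{k}{s}-\binom{v}{s}b_s$ yields a single inequality involving only $|\mathcal{B}|$. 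Solving it for $|\mathcal{B}|$ gives
\[|\mathcal{B}|\geq\mfrac{\binom{v}{s}\big((\alpha-\beta)b_s+\alpha\big)}{(\alpha-\beta)\binom{k}{s}+1}=CB_{(v,k,\lambda;s)}(\alpha,\beta),\]
and the ceiling follows since $|\mathcal{B}|$ is an integer.

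This is essentially routine counting, so I do not anticipate a serious obstacle; the only delicate point is handling the $s$-subsets of excess at least two, since their contributions to both $S$ and $T$ must be controlled simultaneously. The bound $S\leq\tfrac12 T$ together with $\alpha\geq2\beta$ is precisely what makes those terms drop out with the correct sign, and I would be careful to note that $E$, $S$ and $T$ are all nonnegative so that every inequality points in the intended direction.
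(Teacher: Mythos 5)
Your proof is correct and follows essentially the same double-counting argument as the paper: both use $\sum_X b(X)=|\mathcal{B}|\binom{k}{s}$, stratify $\binom{V}{s}$ by excess over $b_s$, exploit that sets of excess at least $2$ contribute doubly (your $S\leq\tfrac12 T$ is the paper's $v_1+2(\binom{v}{s}-v_0-v_1)\leq x$), and invoke $\alpha\geq2\beta$ to reach $|\mathcal{B}|\geq\alpha\binom{v}{s}-(\alpha-\beta)x$ before solving for $|\mathcal{B}|$. The differences are only in bookkeeping, not in substance.
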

\begin{proof}
Let $(V,\mathcal{B})$ be a $t$-$(v,k,\lambda)$ covering. Let $\mathcal{V}=\binom{V}{s}$, $x=|\mathcal{B}|\binom{k}{s}-b_s\binom{v}{s}$ and $v_i=|\mathcal{V}_i|$ for $i \in \{0,1\}$. Note that $v_1+2\left(\binom{v}{s}-v_0-v_1\right) \leq x$ because $b(X) = b_s+i$ for each $X \in \mathcal{V}_i$ for $i \in \{0,1\}$, $b(X) \geq b_s+2$ for each $X \in \mathcal{V} \setminus (\mathcal{V}_0 \cup \mathcal{V}_1)$, and $\sum_{X \in \mathcal{V}}b(X)=|\mathcal{B}|\binom{k}{s}$. It follows that $v_0 \geq \frac{1}{2}(2\binom{v}{s}-v_1-x)$ and so from our hypotheses we have
\[|\mathcal{B}| \geq \tfrac{1}{2}\alpha\left(2\tbinom{v}{s}-v_1-x\right)+\beta v_1 = \alpha\tbinom{v}{s}-\tfrac{1}{2}\alpha x-\tfrac{1}{2}(\alpha-2\beta)v_1.\]
Thus, because $\alpha \geq 2\beta$, it follows from $v_1 \leq |\mathcal{V} \setminus \mathcal{V}_0| \leq x$ that
\[|\mathcal{B}| \geq \alpha\tbinom{v}{s}-\tfrac{1}{2}\alpha x-\tfrac{1}{2}(\alpha-2\beta)x = \alpha \tbinom{v}{s}-(\alpha-\beta)x.\]
Since $x=|\mathcal{B}|\binom{k}{s}-b_s\binom{v}{s}$, we can deduce $|\mathcal{B}| \geq CB_{(v,k,\lambda;s)}(\alpha,\beta)$.
\end{proof}

\begin{remark}\label{Remark:CBboundcomp}
A routine calculation shows that if $b_s+1>\beta\binom{k}{s}$, then the bound $\lceil CB_{(v,k,\lambda;s)}(\alpha,\beta) \rceil$  is inferior to the bound given by $s$ iterated applications of \eqref{equation:Sch} to $b_s+1$.
\end{remark}

\begin{theorem}\label{Theorem:dBig}
Suppose the hypotheses of Lemma~\ref{Lemma:setUp} hold, that $b_s < \binom{k}{s}$, and that $d \geq a_s  \geq 1$. Then
\[C_{\lambda}(v,k,t) \geq \left\lceil
CB_{(v,k,\lambda;s)}\left(\mfrac{a_s+1}{2(d+1)},\mfrac{a_s+1}{2\big(d+\binom{k}{s}\big)}\right)\right\rceil.\]
\end{theorem}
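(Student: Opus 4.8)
The plan is to take an arbitrary $t$-$(v,k,\lambda)$ covering $(V,\mathcal{B})$, build from the matrix $M$ of Lemma~\ref{Lemma:setUp} an auxiliary multigraph, apply the Caro--Tuza Theorem~\ref{Theorem:CT} to extract a large $a_s$-independent set, convert this set into a large diagonally dominant principal submatrix via Lemma~\ref{Lemma:main}, and finally feed the resulting weighted block count into Lemma~\ref{Lemma:CBBound}. Concretely, set $\mathcal{V}_i=\{X \in \binom{V}{s}:b(X)=b_s+i\}$ and let $G$ be the multigraph on vertex set $\binom{V}{s}$ with $\mu_G(XY)=m_{XY}=b(X\cup Y)-b_{|X\cup Y|}$ for distinct $X,Y$. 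By Lemma~\ref{Lemma:setUp}(b) the degree of $X$ in $G$ is $\deg_G(X)=(b(X)-b_s)(\binom{k}{s}-1)+d$; in particular every $X \in \mathcal{V}_0$ has degree $d$ and every $X \in \mathcal{V}_1$ has degree $d+\binom{k}{s}-1$.

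I would then apply Theorem~\ref{Theorem:CT} with $n=a_s$. The crucial point, and the place where the hypothesis $d \geq a_s$ is used, is that both of these degrees are at least $a_s$, so $f_{a_s}$ is evaluated on its decreasing branch $\frac{n+1}{2(x+1)}$ at the corresponding vertices. This gives exactly $f_{a_s}(d)=\frac{a_s+1}{2(d+1)}=\alpha$ for each $X \in \mathcal{V}_0$ and $f_{a_s}(d+\binom{k}{s}-1)=\frac{a_s+1}{2(d+\binom{k}{s})}=\beta$ for each $X \in \mathcal{V}_1$; since $f_{a_s}$ is nonnegative, the remaining vertices contribute nonnegatively. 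Hence Theorem~\ref{Theorem:CT} yields an $a_s$-independent set $\mathcal{S}$ with $|\mathcal{S}| \geq \alpha|\mathcal{V}_0|+\beta|\mathcal{V}_1|$. Because $\mathcal{S}$ is $a_s$-independent, every $X \in \mathcal{S}$ satisfies $\sum_{Y \in \mathcal{S}\setminus\{X\}}(b(X\cup Y)-b_{|X\cup Y|})<a_s \leq a_s+b(X)-b_s$, so Lemma~\ref{Lemma:main} applies and gives $|\mathcal{B}| \geq |\mathcal{S}| \geq \alpha|\mathcal{V}_0|+\beta|\mathcal{V}_1|$, which is precisely the hypothesis required to invoke Lemma~\ref{Lemma:CBBound}.

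To conclude I would apply Lemma~\ref{Lemma:CBBound} with these $\alpha$ and $\beta$. The main obstacle is that Lemma~\ref{Lemma:CBBound} requires $\alpha \geq 2\beta$, and a short computation shows that $\alpha \geq 2\beta$ is equivalent to $d \leq \binom{k}{s}-2$, which does \emph{not} follow from the hypotheses (one can check the natural choice with $(v,k,t,s)=(20,10,4,2)$ gives $d=47>\binom{10}{2}-2$). I would therefore split into two cases. If $\alpha \geq 2\beta$, then Lemma~\ref{Lemma:CBBound} immediately gives $C_\lambda(v,k,t) \geq \lceil CB_{(v,k,\lambda;s)}(\alpha,\beta)\rceil$. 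If instead $\alpha<2\beta$, then $d \geq \binom{k}{s}-1$, so $\alpha\binom{k}{s}=\frac{(a_s+1)\binom{k}{s}}{2(d+1)} \leq \frac{a_s+1}{2}<b_s$ (using $b_s>a_s$, noted after Lemma~\ref{Lemma:setUp}); a direct cross-multiplication then shows $CB_{(v,k,\lambda;s)}(\alpha,\beta) \leq b_s\binom{v}{s}/\binom{k}{s}$. Since $s$ iterated applications of \eqref{equation:Sch} starting from $b_s \leq C_\lambda(v-s,k-s,t-s)$ already yield $C_\lambda(v,k,t) \geq b_s\binom{v}{s}/\binom{k}{s}$, the claimed bound holds trivially in this case.

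I expect the genuinely delicate points to be, first, the verification that the two relevant degrees land on the correct branch of $f_{a_s}$ so that the coefficients emerge as exactly $\alpha$ and $\beta$; and second, the recognition that $\alpha \geq 2\beta$ can fail and that, precisely when it does, the $CB$-bound is dominated by the Schönheim bound and so costs nothing. The hypothesis $b_s<\binom{k}{s}$, as for Theorem~\ref{Theorem:main}, is a necessary condition for the bound to improve on the Schönheim bound rather than something needed to establish its validity.
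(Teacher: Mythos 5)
Your proof follows essentially the same route as the paper's: the same multigraph $G$ with $\mu_G(XY)=b(X\cup Y)-b_{|X\cup Y|}$, the same application of Theorem~\ref{Theorem:CT} with $n=a_s$ to the degrees $d$ and $d+\binom{k}{s}-1$ supplied by Lemma~\ref{Lemma:setUp}(b), and the same passage through Lemma~\ref{Lemma:main} into Lemma~\ref{Lemma:CBBound}. The one genuine difference is your final case split: the paper's proof invokes Lemma~\ref{Lemma:CBBound} without checking its hypothesis $\alpha\geq 2\beta$, which for these $\alpha,\beta$ is equivalent to $d\leq\binom{k}{s}-2$ and, as your example with $(v,k,t,s)=(20,10,4,2)$ shows, is not forced by the theorem's hypotheses. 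Your repair is correct: when $\alpha<2\beta$ one has $d\geq\binom{k}{s}-1$, hence $\alpha\binom{k}{s}\leq\frac{a_s+1}{2}<b_s$, and cross-multiplication shows $CB_{(v,k,\lambda;s)}(\alpha,\beta)\leq b_s\binom{v}{s}/\binom{k}{s}$, which is already guaranteed by $s$ iterations of \eqref{equation:Sch}; so the stated inequality holds in that case too (albeit vacuously, since it is then dominated by the Sch\"onheim bound). You have thus reproduced the paper's argument and closed a small gap in it.
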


\begin{proof}
Let $(V,\mathcal{B})$ be a $t$-$(v,k,\lambda)$ covering. Let $\mathcal{V}_i=\{X \in \binom{V}{s}:b(X)=b_s+i\}$ for $i \in \{0,1\}$. Let $G$ be the multigraph with vertex set $\binom{V}{s}$ such that $\mu_G(XY)=b(X \cup Y)-b_{|X \cup Y|}$ for each pair of distinct vertices $X$ and $Y$.

By the definition of $G$, for a positive integer $n$, an $n$-independent set $\mathcal{S}$ in the multigraph $G$ is a subset of $\binom{V}{s}$ with the property that, for all $X \in \mathcal{S}$,
\[\medop \sum_{Y \in \mathcal{S} \setminus \{X\}} \left(b(X \cup Y)-b_{|X \cup Y|}\right) < n.\]
Consequently, if $n \leq a_s$, then $\mathcal{S}$ satisfies the hypotheses of Lemma~\ref{Lemma:main} and $|\mathcal{B}|\geq|\mathcal{S}|$. So, by Lemma~\ref{Lemma:CBBound}, it suffices to show that $G$ has an $a_s$-independent set of size at least
\[\mfrac{a_s+1}{2d+2}|\mathcal{V}_0|+\mfrac{a_s+1}{2\big(d+\binom{k}{s}\big)}|\mathcal{V}_1|.\]
By Lemma~\ref{Lemma:setUp}(b), $\deg_G(X)=d$ for all $X \in \mathcal{V}_0$ and $\deg_G(X)=d+\binom{k}{s}-1$ for all $X \in \mathcal{V}_1$. Thus, because $d \geq a_s$, $G$ has an $a_s$-independent set of the required size by Theorem~\ref{Theorem:CT}.
\end{proof}

We only need consider the natural choice of $b_s$ in Theorem~\ref{Theorem:dBig}. This follows by Remark~\ref{Remark:CBboundcomp} because
\[\mfrac{(a_s+1)\binom{k}{s}}{2\big(d+\binom{k}{s}\big)} < \mfrac{a_s+1}{2} < a_s+1 < b_s+1.\]
	
\section{Improved bounds for the case \texorpdfstring{$d<a_s$}{d<as}}\label{impSec}

In this section we will show that, by using techniques similar to those of the last section in the case $d < a_s$, we can sometimes improve on Theorem \ref{Theorem:main}. We require a slight variant of Lemma~\ref{Lemma:main}.

\begin{lemma}\label{Lemma:mainVariant}
Suppose the hypotheses of Lemma~\ref{Lemma:setUp} hold and there exists a subset $\mathcal{S}$ of $\binom{V}{s}$ and positive real numbers $(c_X)_{X \in \mathcal{S}}$ such that, for each $X \in \mathcal{S}$,
\[\medop \sum_{Y \in \mathcal{S} \setminus \{X\}} c_Y\left(b(X \cup Y)-b_{|X \cup Y|}\right) < c_X\left(a_s+b(X)-b_s\right),\]
then $|\mathcal{B}| \geq |\mathcal{S}|$.
\end{lemma}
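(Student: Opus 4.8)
The plan is to mimic the proof of Lemma~\ref{Lemma:main}, but with a diagonal congruence transformation that turns the weighted diagonal-dominance hypothesis into ordinary diagonal dominance. As before, by Lemma~\ref{Lemma:basicFacts}(ii) it suffices to show that the principal submatrix of $AA^T$ indexed by $\mathcal{S}$ has full rank, and since $AA^T = P + M$ with $P$ positive semidefinite by Lemma~\ref{Lemma:setUp}(a), it suffices to show that the principal submatrix $M'$ of $M$ indexed by $\mathcal{S}$ is positive definite. The difficulty is that $M'$ itself need not be diagonally dominant under the weighted hypothesis; the weights $(c_X)_{X \in \mathcal{S}}$ must be absorbed before the Gershgorin argument applies.

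The key step is to conjugate by the diagonal matrix. Let $D$ be the diagonal matrix indexed by $\mathcal{S}$ with $(X,X)$ entry $\sqrt{c_X}$ (these are positive since each $c_X > 0$), and consider $N = D M' D$. Then $N$ is a symmetric matrix with $(X,X)$ entry $c_X\left(a_s + b(X) - b_s\right)$ and off-diagonal $(X,Y)$ entry $\sqrt{c_X c_Y}\left(b(X \cup Y) - b_{|X \cup Y|}\right)$. I would check that the hypothesis yields diagonal dominance of $N$: in row $X$, the sum of the magnitudes of the off-diagonal entries is
\[
\medop\sum_{Y \in \mathcal{S} \setminus \{X\}} \sqrt{c_X c_Y}\left(b(X \cup Y) - b_{|X \cup Y|}\right) = \sqrt{c_X}\medop\sum_{Y \in \mathcal{S} \setminus \{X\}} \sqrt{c_Y}\left(b(X \cup Y) - b_{|X \cup Y|}\right),
\]
and one wants this to be strictly less than the diagonal magnitude $c_X\left(a_s + b(X) - b_s\right) = \sqrt{c_X} \cdot \sqrt{c_X}\left(a_s + b(X) - b_s\right)$. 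The obstruction here is that the hypothesis involves $c_Y$ in the sum but $\sqrt{c_Y}$ appears after symmetric conjugation, so a plain symmetric conjugation does not directly match the hypothesis.

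To fix this, I would instead use the asymmetric transformation: the matrix $M'$ is positive definite if and only if $D^{-1} M' D$ is, since similar matrices share eigenvalues (and $M'$ is symmetric, hence diagonalisable with real eigenvalues, so positive definiteness is detected by the spectrum). Taking $D = \diag(c_X)_{X \in \mathcal{S}}$, the matrix $\widetilde{M} = D^{-1} M' D$ has $(X,X)$ entry $a_s + b(X) - b_s$ and $(X,Y)$ entry $\tfrac{c_Y}{c_X}\left(b(X \cup Y) - b_{|X \cup Y|}\right)$ for $X \neq Y$. Now the off-diagonal magnitudes in row $X$ sum to $\tfrac{1}{c_X}\sum_{Y \in \mathcal{S}\setminus\{X\}} c_Y\left(b(X\cup Y)-b_{|X\cup Y|}\right)$, which by the hypothesis is strictly less than $a_s + b(X) - b_s$, the magnitude of the diagonal entry (this is positive because $b_s > a_s$ forces nothing problematic; rather $a_s + b(X) - b_s \geq a_s > 0$ since $b(X) \geq b_s$ by Lemma~\ref{Lemma:setUp}). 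Hence $\widetilde{M}$ is diagonally dominant with positive diagonal, so by Gershgorin its eigenvalues all have positive real part; being real (as $\widetilde{M}$ is similar to the symmetric $M'$), they are positive, so $M'$ is positive definite.

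The main obstacle is this bookkeeping about which conjugation to use: the symmetric square-root conjugation preserves positive definiteness most transparently but garbles the hypothesis, while the similarity conjugation $D^{-1} M' D$ matches the hypothesis exactly but produces a nonsymmetric matrix whose positive definiteness must be inferred via the real-spectrum-plus-Gershgorin route rather than directly. I would be careful to invoke that $\widetilde{M}$ has the same (real) eigenvalues as the symmetric $M'$ and that Gershgorin places them in discs centred at positive reals with radius smaller than the centre, forcing positivity. With $M'$ thus shown positive definite, the conclusion $|\mathcal{B}| \geq \rank(AA^T) \geq |\mathcal{S}|$ follows exactly as in Lemma~\ref{Lemma:main}.
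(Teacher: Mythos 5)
Your proof is correct and follows essentially the same route as the paper's: both rescale the principal submatrix $M'$ by the diagonal matrix of the weights $c_X$ so that the weighted hypothesis becomes ordinary row diagonal dominance, and then apply the Gershgorin circle theorem. The only (immaterial) difference is that the paper uses the one-sided column scaling $M'D$ and transfers positive definiteness back to $M'$ via Sylvester's criterion, whereas you use the similarity $D^{-1}M'D$ and transfer it back via the shared real spectrum.
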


\begin{proof}
The proof of Lemma~\ref{Lemma:main} applies, except that our hypotheses here imply via the Gershgorin circle theorem (see \cite[p.16-6]{Hog}) that the matrix $M''$ rather than $M'$ is positive definite, where $M''$ is obtained from $M'$ by multiplying the entries in column $X$ by $c_X$ for each $X \in \mathcal{S}$. However, it is easy to see (using Sylvester's criterion \cite[p.9-7]{Hog}, for example) that $M'$ is positive definite if and only if $M''$ is.
\end{proof}

In Section~\ref{genSec} we employed multigraphs, but in this section we will work in a more general setting of edge-weighted graphs. An edge-weighted graph $G$ is a complete (simple) graph in which each edge has been assigned a nonnegative real weight. We denote the weight of an edge $uw$ in such a graph $G$ by $\wt_G(uw)$ and we define the weight of a vertex $u$ of $G$ as $\wt_G(u)=\sum_{w \in V(G) \setminus \{u\}}\wt_G(uw)$. For $S \subseteq V(G)$, let $G[S]$ denote the edge-weighted subgraph of $G$ induced by $S$. We generalise our notion of an $n$-independent set by saying, for a positive integer $n$, that a subset $S$ of the vertices of an edge-weighted graph $G$ is \emph{$n$-independent} in $G$ if $\wt_{G[S]}(u) < n$ for each $u \in S$.

We will require a technical result which guarantees the existence of an $n$-independent set of a certain size in an edge-weighted graph of a specific form. This result was effectively proved in \cite{Hor}.

\begin{lemma}\label{Lemma:weightedIndepSetBound}
Let $n$, $d$ and $d'$ be nonnegative integers such that $d < n < d' -d$, and let $G$ be a multigraph on some vertex set $\mathcal{V}_0 \cup \mathcal{V}_1$ such that $\deg_{G}(X)=d$ for $X \in \mathcal{V}_0$ and $\deg_{G}(X)=d'$ for $X \in \mathcal{V}_1$. Let $c$ be a real number such that $c>\frac{d}{n}$ and let $G^*$ be the edge-weighted graph on vertex set $\mathcal{V}_0 \cup \mathcal{V}_1$ such that, for all distinct $X,Y \in \mathcal{V}_0 \cup \mathcal{V}_1$,
\[\wt_{G^*}(XY)=
\left\{
  \begin{array}{ll}
    0, & \hbox{if $X,Y \in \mathcal{V}_0$;} \\
    \mu_G(XY), & \hbox{if $X,Y \in \mathcal{V}_1$;} \\
    c\mu_G(XY), & \hbox{otherwise.}
  \end{array}
\right.\]
Let $\alpha$ and $\beta$ be real numbers such that one of the following holds.
\begin{itemize}[nosep,topsep=1mm,itemsep=1mm]
	\item[(a)]
$(\alpha,\beta)=\Big(1-\frac{d^2}{2n(n+1)},\frac{n+2}{2(d'+1)}\Big)$.
	\item[(b)]
$(\alpha,\beta)=\Big(1,1-\frac{dd'}{n(n+1)}\Big)$, $d\geqslant \frac{n}{2}$ and $dd' < n(n+1)$.
	\item[(c)]
$(\alpha,\beta)=\Big(1, \sqrt{\frac{d(n+2)}{(n+1)(n-d)}}-\frac{d(d'+1)}{2(n+1)(n-d)}\Big)$, $d< \frac{n}{2}$, and $d(d'+1)^2 < 4(n+1)(n+2)(n-d)$.
\end{itemize}
Then $\alpha \geq 2\beta > 0$ and, if $c$ is sufficiently close to $\frac{d}{n}$, $G^*$ has an $(n+1)$-independent set $\mathcal{S}$ such that $\mathcal{V}_0 \subseteq \mathcal{S}$ and $|\mathcal{S}| \geq \alpha|\mathcal{V}_0| +\beta|\mathcal{V}_1|$.
\end{lemma}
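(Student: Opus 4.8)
The plan is to exploit the near-equality $c \approx \tfrac{d}{n}$ to reduce the problem to a Caro--Tuza-style selection carried out entirely on the $\mathcal{V}_1$ side, and then to optimise that selection, the three displayed pairs $(\alpha,\beta)$ corresponding to the optimal choice in three parameter regimes. Verifying the inequality $\alpha \geq 2\beta > 0$ (needed later by Lemma~\ref{Lemma:CBBound}) will be a separate, elementary check.

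First I would record the observation that makes $\mathcal{V}_0$ ``free''. For any $X \in \mathcal{V}_0$ every incident edge of $G^*$ has weight $0$ (to $\mathcal{V}_0$) or $c\mu_G$ (to $\mathcal{V}_1$), so $\wt_{G^*}(X) \leq c\deg_G(X) = cd$. Since $d < n$, choosing $c$ close enough to $\tfrac{d}{n}$ gives $cd < n+1$; hence $X$ satisfies the $(n+1)$-independence constraint no matter which vertices of $\mathcal{V}_1$ are chosen. Thus I may fix $\mathcal{V}_0 \subseteq \mathcal{S}$ at the outset and only decide which vertices of $\mathcal{V}_1$ to add. For $X \in \mathcal{V}_1$, write $d_0(X)$ and $d_1(X)=d'-d_0(X)$ for the numbers of edges (with multiplicity) from $X$ to $\mathcal{V}_0$ and to $\mathcal{V}_1$. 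Once $\mathcal{V}_0$ is included, an included $X \in \mathcal{V}_1$ is safe precisely when $\sum_{Y \in (\mathcal{S}\cap\mathcal{V}_1)\setminus\{X\}} \mu_G(XY) < n+1-c\,d_0(X)$, so the task becomes that of finding a large subset of $\mathcal{V}_1$ that is independent in $G[\mathcal{V}_1]$ with respect to these per-vertex (fractional) thresholds.

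For the main construction I would adapt the random-permutation selection underlying Theorem~\ref{Theorem:CT} so that $\mathcal{V}_0$ is forced into $\mathcal{S}$ and the $\mathcal{V}_0$--$\mathcal{V}_1$ edges are discounted by $c$, producing an $(n+1)$-independent set containing $\mathcal{V}_0$. By linearity of expectation the expected number of chosen vertices of $\mathcal{V}_1$ is a sum over $X \in \mathcal{V}_1$ of an explicit function of $d_0(X)$ and $d_1(X)$; minimising this sum subject to $d_0(X)+d_1(X)=d'$ and to the global constraint $\sum_{X\in\mathcal{V}_1} d_0(X) \leq d|\mathcal{V}_0|$ (double-counting the $\mathcal{V}_0$--$\mathcal{V}_1$ edges) should yield a bound of the shape $\beta|\mathcal{V}_1| - (1-\alpha)|\mathcal{V}_0|$, that is, $|\mathcal{S}| \geq \alpha|\mathcal{V}_0| + \beta|\mathcal{V}_1|$. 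The three pairs would then arise from the optimal inclusion rule: case (a) uses the plain threshold and absorbs the cross-interference into the factor $\alpha = 1-\tfrac{d^2}{2n(n+1)}$, while cases (b) and (c) tune a sampling probability so as to eliminate the interference ($\alpha=1$), the split at $d=\tfrac n2$ and the square root in (c) reflecting the maximiser of the resulting quadratic in that probability. Throughout I would pass to the limit $c \downarrow \tfrac dn$ at the end, using that the $\mathcal{V}_1$-contributions to each weighted degree are integers so the strict inequalities survive.

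Finally I would verify $\alpha \geq 2\beta > 0$ in each case, each reducing to a short computation from $d' > n+d$, $d < n$ and the stated side conditions; for instance in case (c) the hypothesis $d(d'+1)^2 < 4(n+1)(n+2)(n-d)$ is exactly what makes $\beta$ positive. The hard part will be the expectation computation and its optimisation: obtaining the precise closed forms for $\beta$ in the three regimes requires carefully tracking how the residual budget $n+1-c\,d_0(X)$ interacts with the random ordering, worst-casing over the distribution of the degree split $d'=d_0(X)+d_1(X)$, and checking that the optimal sampling probability lies in the admissible range exactly under the stated hypotheses. I expect the most delicate bookkeeping to be ensuring that the ``$\mathcal{V}_0$ is free'' reduction and the limit $c \downarrow \tfrac dn$ interact correctly with the required strictness of $(n+1)$-independence.
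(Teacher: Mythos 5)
Your overall strategy is the right one and matches what the paper does: the paper also treats $\mathcal{V}_0$ as automatically safe (each $X\in\mathcal{V}_0$ has $\wt_{G^*}(X)\le cd<n+1$ once $c$ is close to $\frac{d}{n}$, since $d<n$) and obtains the rest from a Caro--Tuza-style weighted selection over $\mathcal{V}_1$. In fact the paper does not re-derive that part at all: after verifying $\alpha\ge 2\beta>0$ by explicit algebra, it observes that the existence of $\mathcal{S}$ is established in the course of the proof of Theorem 14 of \cite{Hor} for the special case $d'=d+k-1$, and that the argument there applies unchanged for any $d'>n+d$.

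The genuine gap is that the entire quantitative content of the lemma --- the derivation of the three specific pairs $(\alpha,\beta)$, including the factor $1-\frac{d^2}{2n(n+1)}$ in (a), the split at $d=\frac{n}{2}$ between (b) and (c), and the square root in (c) --- is deferred as ``the hard part'' and never carried out. You assert that minimising the expected number of selected vertices ``should yield a bound of the shape $\beta|\mathcal{V}_1|-(1-\alpha)|\mathcal{V}_0|$'', but nothing in the proposal allows a reader to check that the random-permutation or sampling computation actually produces these closed forms, nor that the stated side conditions (e.g.\ $dd'<n(n+1)$ in (b), $d(d'+1)^2<4(n+1)(n+2)(n-d)$ in (c)) are exactly what makes the optimal sampling probability admissible; these formulas are far too specific to accept on the strength of ``this should work out''. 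Likewise, $\alpha\ge 2\beta>0$ is declared ``a short computation'' but not performed; the paper does it explicitly, exhibiting $\alpha-2\beta$ as a manifestly nonnegative rational expression in cases (a) and (b), and in case (c) using $d'+1\ge n+2$ together with the inequality $2\sqrt{x}-x\le 1$ applied to $x=\frac{d(n+2)}{(n+1)(n-d)}$. As written, your proposal is a credible plan for reconstructing the proof of Theorem 14 of \cite{Hor}, not a proof of the lemma.
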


\begin{proof}
When (a) holds we obviously have $\beta>0$ and
\[\alpha-2\beta=\mfrac{(d'-n-d-1)(2n(n+1)-d^2)+(n-d)(2d(n+1)+d^2)}{2n(n+1)(d'+1)}\]
is nonnegative because $d'>n+d$ and $n>d$. When (b) holds we have $\beta>0$ because $dd' < n(n+1)$ and
\[\alpha-2\beta=\mfrac{2dd'-n(n+1)}{n(n+1)}\]
is nonnegative because $d'>n+d$ and $d \geq \frac{n}{2}$. When (c) holds we have $\beta>0$ because $d(d'+1)^2 < 4(n+1)(n+2)(n-d)$ and $\frac{d(d'+1)}{(n+1)(n-d)} \geq \frac{d(n+2)}{(n+1)(n-d)}$ because $d'>n$. Thus, since $2\sqrt{x}-x \leq 1$ for each nonnegative real number $x$, we have $\alpha \geq 2\beta$.

In the course of the proof of \cite[Theorem 14]{Hor}, the remainder of this result is proved for the case $d'=d+k-1$. It is a routine exercise to show that the proof given there applies here for any $d' > n+d$.
\end{proof}

We can now establish our improvements on Theorem~\ref{Theorem:main}.

\begin{theorem}\label{Theorem:smallDImprovements}
Suppose the hypotheses of Lemma~\ref{Lemma:setUp} hold, that $b_s < \binom{k}{s}$, and that $d < a_s$. Let $d'=d+\binom{k}{s}-1$. Then $C_\lambda(v,k,t) \geq \left\lceil CB_{(v,k,\lambda;s)}\left(\alpha,\beta\right)\right\rceil$ when one of the following holds.
\begin{itemize}
	\item[(a)]
$(\alpha,\beta)=\Big(1-\frac{d^2}{2a_s(a_s+1)},\frac{a_s+2}{2(d'+1)}\Big)$.
	\item[(b)]
$(\alpha,\beta)=\Big(1,1-\frac{dd'}{a_s(a_s+1)}\Big)$, $d \geq \frac{a_s}{2}$ and $dd' < a_s(a_s+1)$.
	\item[(c)]
$(\alpha,\beta)=\Big(1, \sqrt{\frac{d(a_s+2)}{(a_s+1)(a_s-d)}}-\frac{d(d'+1)}{2(a_s+1)(a_s-d)}\Big)$, $d < \frac{a_s}{2}$ and $d(d'+1)^2 < 4(a_s+1)(a_s+2)(a_s-d)$.
\end{itemize}
\end{theorem}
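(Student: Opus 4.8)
The plan is to reduce Theorem~\ref{Theorem:smallDImprovements} to a direct application of Lemma~\ref{Lemma:weightedIndepSetBound} combined with Lemma~\ref{Lemma:CBBound}, just as Theorem~\ref{Theorem:dBig} was obtained from Theorem~\ref{Theorem:CT} and Lemma~\ref{Lemma:CBBound}. First I would fix a $t$-$(v,k,\lambda)$ covering $(V,\mathcal{B})$ and set $n=a_s$. Following the proof of Theorem~\ref{Theorem:dBig}, I would define the multigraph $G$ on vertex set $\binom{V}{s}$ with $\mu_G(XY)=b(X\cup Y)-b_{|X\cup Y|}$, and let $\mathcal{V}_i=\{X\in\binom{V}{s}:b(X)=b_s+i\}$ for $i\in\{0,1\}$. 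By Lemma~\ref{Lemma:setUp}(b), every $X\in\mathcal{V}_0$ has $\deg_G(X)=d$ and every $X\in\mathcal{V}_1$ has $\deg_G(X)=d+\binom{k}{s}-1=d'$, matching the degree hypotheses of Lemma~\ref{Lemma:weightedIndepSetBound}. The hypotheses $d<a_s$ and $b_s<\binom{k}{s}$ give $d<n$ and $n<d'-d$ (since $d'-d=\binom{k}{s}-1\geq b_s\geq a_s=n$, using $b_s>a_s$ noted after Lemma~\ref{Lemma:setUp}), so the graph is of the required form.

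Next I would pass to the edge-weighted graph $G^*$ of Lemma~\ref{Lemma:weightedIndepSetBound}, choosing $c>\frac{d}{n}$ sufficiently close to $\frac{d}{n}$. The lemma then supplies an $(n+1)$-independent set $\mathcal{S}$ with $\mathcal{V}_0\subseteq\mathcal{S}$ and $|\mathcal{S}|\geq\alpha|\mathcal{V}_0|+\beta|\mathcal{V}_1|$ for the $(\alpha,\beta)$ in whichever of cases (a), (b), (c) applies; the side conditions in each case of Theorem~\ref{Theorem:smallDImprovements} are precisely those of Lemma~\ref{Lemma:weightedIndepSetBound} with $n=a_s$, and the lemma also certifies $\alpha\geq 2\beta>0$, which is what Lemma~\ref{Lemma:CBBound} demands.

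The crux is to translate the $(n+1)$-independence of $\mathcal{S}$ in $G^*$ into the weighted diagonal-dominance hypothesis of Lemma~\ref{Lemma:mainVariant}, so that $|\mathcal{B}|\geq|\mathcal{S}|$. I would set $c_X=1$ for $X\in\mathcal{S}\cap\mathcal{V}_1$ and $c_X=c$ for $X\in\mathcal{S}\cap\mathcal{V}_0$ (recall $\mathcal{V}_0\subseteq\mathcal{S}$). For $X\in\mathcal{S}$ the condition $\wt_{G^*[\mathcal{S}]}(X)<n+1$ reads $\sum_{Y\in\mathcal{S}\setminus\{X\}}c_Y\mu_G(XY)/c_X<n+1$, i.e.\ $\sum_{Y}c_Y(b(X\cup Y)-b_{|X\cup Y|})\leq c_X\,n=c_X(a_s+b(X)-b_s)$ exactly when $b(X)=b_s$ (for $X\in\mathcal{V}_0$) and with the slack provided by $b(X)-b_s=1$ absorbing the difference between $n$ and $n+1$ (for $X\in\mathcal{V}_1$); here integrality of the weighted sum lets the strict-inequality-below-$n+1$ become the needed strict inequality below $c_X(a_s+b(X)-b_s)$. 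Since $\mathcal{S}\subseteq\mathcal{V}_0\cup\mathcal{V}_1$, these are the only two cases, and the weighting by $c_X$ is exactly why the weighted variant Lemma~\ref{Lemma:mainVariant} is needed rather than Lemma~\ref{Lemma:main}.

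Finally, having established $|\mathcal{B}|\geq|\mathcal{S}|\geq\alpha|\mathcal{V}_0|+\beta|\mathcal{V}_1|$ for every covering, I would invoke Lemma~\ref{Lemma:CBBound} (whose hypothesis $b(X)\geq b_s$ holds by Lemma~\ref{Lemma:setUp} and whose hypothesis $|\mathcal{B}|\geq\alpha|\mathcal{V}_0|+\beta|\mathcal{V}_1|$ we have just shown) to conclude $C_\lambda(v,k,t)\geq\lceil CB_{(v,k,\lambda;s)}(\alpha,\beta)\rceil$. The main obstacle I anticipate is the bookkeeping in the third paragraph: verifying that the precise inequalities $c>\frac{d}{n}$ and the $(n+1)$-independence translate, via integrality and the split into $\mathcal{V}_0$ and $\mathcal{V}_1$, into the strict weighted diagonal dominance of Lemma~\ref{Lemma:mainVariant}. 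Everything else is a matter of checking that the side conditions of Theorem~\ref{Theorem:smallDImprovements} are verbatim those of Lemma~\ref{Lemma:weightedIndepSetBound} with $n=a_s$.
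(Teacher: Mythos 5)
Your proposal follows the paper's proof essentially verbatim: the same multigraph $G$ with $\mu_G(XY)=b(X\cup Y)-b_{|X\cup Y|}$, the same appeal to Lemma~\ref{Lemma:weightedIndepSetBound} with $n=a_s$ (after checking $d<a_s<d'-d$ via $a_s<b_s<\binom{k}{s}$), the same weights $c_X=c$ on $\mathcal{S}\cap\mathcal{V}_0$ and $c_X=1$ on $\mathcal{S}\cap\mathcal{V}_1$ fed into Lemma~\ref{Lemma:mainVariant}, and the same final appeal to Lemma~\ref{Lemma:CBBound}. The one place your bookkeeping in the third paragraph goes astray is the claim that $(a_s+1)$-independence of $\mathcal{S}$ in $G^*$ \emph{is} the weighted diagonal-dominance condition after dividing by $c_X$: for $X\in\mathcal{V}_0$ the edge weights of $G^*$ (zero on $\mathcal{V}_0$--$\mathcal{V}_0$ edges, $c\mu_G(XY)$ on cross edges) do not coincide with $(c_Y/c_X)\mu_G(XY)$, so no such translation exists for those vertices, and no integrality argument is involved. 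The paper instead handles $X\in\mathcal{S}\cap\mathcal{V}_0$ by the cruder bound $\sum_{Y\in\mathcal{S}\setminus\{X\}}c_Y\bigl(b(X\cup Y)-b_{|X\cup Y|}\bigr)\leq\deg_G(X)=d<c\,a_s=c_X\bigl(a_s+b(X)-b_s\bigr)$, valid because every $c_Y\leq 1$ and $c>d/a_s$; the $(a_s+1)$-independence of $\mathcal{S}$ in $G^*$ is used only for $X\in\mathcal{S}\cap\mathcal{V}_1$, where it gives exactly $\wt_{G^*[\mathcal{S}]}(X)<a_s+1=c_X\bigl(a_s+b(X)-b_s\bigr)$. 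With that local repair your argument is the paper's.
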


\begin{proof}
Let $(V,\mathcal{B})$ be a $t$-$(v,k,\lambda)$ covering. Let $\mathcal{V}_i=\{X \in \binom{V}{s}:b(X)=b_s+i\}$ for $i \in \{0,1\}$. Let $G$ be the multigraph with vertex set $\binom{V}{s}$ such that $\mu_G(XY)=b(X \cup Y)-b_{|X \cup Y|}$ for each pair of distinct vertices $X$ and $Y$. Note that, by Lemma~\ref{Lemma:setUp}, $\deg_G(X)=d$ for each $X \in \mathcal{V}_0$ and $\deg_G(X)=d'$ for each $X \in \mathcal{V}_1$. Also, $d<a_s<d'-d$ because $d'-d=\binom{k}{s}-1$ and $a_s < b_s < \binom{k}{s}$. Thus, by Lemma~\ref{Lemma:weightedIndepSetBound}, there is a real number $c > \frac{d}{a_s}$ such that the edge-weighted graph $G^*$ obtained from $G[\mathcal{V}_0 \cup \mathcal{V}_1]$ as in Lemma~\ref{Lemma:weightedIndepSetBound} has an $(a_s+1)$-independent set $\mathcal{S}$ such that $\mathcal{V}_0 \subseteq \mathcal{S}$ and $|\mathcal{S}| \geq \alpha|\mathcal{V}_0| +\beta|\mathcal{V}_1|$.
We show that we can apply Lemma~\ref{Lemma:mainVariant} to $\mathcal{S}$ choosing $c_X=c$ for $X \in \mathcal{S} \cap \mathcal{V}_0$ and $c_X=1$ for $X \in \mathcal{S} \cap \mathcal{V}_1$. By Lemma~\ref{Lemma:CBBound} this will suffice to complete the proof.

If $X \in \mathcal{S} \cap \mathcal{V}_0$, then $c_X=c$, $b(X)=b_s$, and
\[\medop \sum_{Y \in \mathcal{S} \setminus \{X\}} c_Y\left(b(X \cup Y)-b_{|X \cup Y|}\right) \leq d < ca_s=c_X\left(a_s+b(X)-b_s\right)\]
where the first inequality follows from Lemma~\ref{Lemma:setUp}(b).
If $X \in \mathcal{S} \cap \mathcal{V}_1$, then $c_X=1$, $b(X)=b_s+1$, and
\[\medop \sum_{Y \in \mathcal{S} \setminus \{X\}} c_Y\left(b(X \cup Y)-b_{|X \cup Y|}\right)
= \wt_{G^*[\mathcal{S}]}(X) < a_s+1 = c_X\left(a_s+b(X)-b_s\right)\]
where the first equality follows from the definition of $G^*$ and our choice of $c_Y$ for $Y \in \mathcal{S}$ and the inequality follows from the fact that $\mathcal{S}$ is an $(a_s+1)$-independent set in $G^*$.
\end{proof}

Again, we only need consider the natural choice of $b_s$ in Theorem~\ref{Theorem:smallDImprovements}. To establish this it suffices, by Remark~\ref{Remark:CBboundcomp} and the fact that $b_s>a_s$, to show that $a_s+2-\beta\binom{k}{s}$ is positive.  When (a) holds this is the case because
\[\mfrac{(a_s+2)\binom{k}{s}}{2(d'+1)} \leq \mfrac{a_s+2}{2} < a_s+2.\]
When (b) or (c) holds, $a_s+2-\beta\binom{k}{s}$ is a quadratic in $\binom{k}{s}$ (note that $d'=d+\binom{k}{s}-1$) and we can compute its global minimum in terms of $a_s$ and $d$. When (b) holds this minimum is equal to
\[\mfrac{1}{4da_s(a_s+1)}\left((2d-a_s)(a_s^3 + 2a_s^2 + ad + a_s) + d(2a_s^3 - d^3 + 7a_s^2 + 4a_s) +
  d^2(2a_s^2 + 2d - 1)\right)\]
which is positive since $\frac{a_s}{2} \leq d <a$. When (c) holds this minimum is equal to
\begin{multline*}
\mfrac{1}{8(a_s+1)(a_s-d)}\big(4d\sqrt{d(a_s+1)(a_s+2)(a_s-d)} + (a_s - 2d)(2a_s^2 + 6a_s + 12) \\
+ (2a_s^3 - d^3 +16d) + 2a_s(3a_s-2)\big)
\end{multline*}
which is positive since $0\leq d<\frac{a_s}{2}$.

There are situations in which each of the Theorem~\ref{Theorem:smallDImprovements} bounds is superior to both of the others. In the special case when $d=0$, Theorem~\ref{Theorem:smallDImprovements}(a) is the best of our bounds.

\section{Improvements for small parameter sets}

We conclude with some tables which detail small parameter sets for which the results in this paper produce an improvement over the previously best known lower bound on $C(v,k,t)$. For $t=2$ similar tables appear in \cite{Hor}, so we concentrate here on the case $t \geq 3$. Our methodology in producing these tables is as follows.

To determine whether we see an improvement for $C(v',k',t')$ we successively evaluate a ``best known'' bound $b_{(v,k,t)}$ for $C(v,k,t)$ for $(v,k,t)=(v'-t'+1,k'-t'+1,1),(v'-t'+2,k'-t'+2,2),\ldots,(v',k',t')$. This ``best known'' bound incorporates the following.
\begin{itemize}[itemsep=0.5mm,parsep=0mm]
    \item
$C(v,k,1)=\lceil \frac{v}{k} \rceil$.
    \item
$C(v,k,t) \geq \lceil \frac{v}{k}b_{(v-1,k-1,t-1)} \rceil$ by \eqref{equation:Sch}.
    \item
The Mills and Mullin result stated in \eqref{equation:MilMul}.
    \item
Results for a fixed number of blocks from \cite{Mil,GreLiVan,Tod85,TodTon}. These include results for $t=2$, for $t=3$, and for general $t$. (The $t \in \{2,3\}$ results are summarised in \cite{GorSti}.)
    \item
Theorems 2.1, 3.1 and 4.4 of \cite{TodLB}.
    \item
The lower bound of de Caen \cite{deC}.
    \item
The lower bounds listed for $t \leq 8$, $v \leq 99$, $k \leq 25$ at the La Jolla Covering Repository \cite{Gor}.
    \item
Theorems~\ref{Theorem:main},~\ref{Theorem:dBig} and~\ref{Theorem:smallDImprovements} of this paper, applied with $s \in \{1,\ldots,\lfloor\frac{t}{2}\rfloor\}$ and with $b_i$ chosen as $b_{(v-i,k-i,t-i)}$ for $i \in \{s,\ldots,2s\}$ (note that these theorems with $s=1$ specialise to the results in \cite{Hor}).
\end{itemize}
If the bound provided for $C(v',k',t')$ by one of the theorems of this paper (using a particular choice of $s$) strictly exceeds the bound provided by any of the other results, then we include $v'$ in the appropriate location in the tables. If, moreover, the bound provided for $C(v',k',t')$ by Theorem~\ref{Theorem:dBig} or Theorem~\ref{Theorem:smallDImprovements} strictly exceeds the bound provided by Theorem~\ref{Theorem:main}, then the table entry is set in italic or bold font, respectively. All improvements for $k \leq 40$ when $t=3$, when $t\in \{4,5\}$ and when $t\in\{6,7,8\}$ are given in Tables~\ref{table:t3}, \ref{table:t45}, and \ref{table:t678} respectively (recall from the discussion after Theorem~\ref{Theorem:main} that we obtain no improvements for sufficiently large $v$). Of course the listed improvements will, via \eqref{equation:Sch}, imply many further improvements for higher values of $t$, but we do not include these subsequent improvements in our tables.

\begin{table}[htbp]
	\centering
	\footnotesize
\caption{$v$'s with an improved lower bound on $C(v,k,t)$ when $t=3$}\label{table:t3}
	\begin{tabular}{|l|l|}
		\hline
		$k$ & $s=1$\\\hline
		9&19 \\
		10&21,\textbf{22} \\
		12&\textbf{26} \\
		13&29 \\
		15&33,42,45 \\
		16&35,\textbf{36},45,46,48,49 \\
		17&\textbf{33},48,49,51,52,53 \\
		18&\textbf{35},40,51,59 \\
		19&\textbf{37},42,43,54,\textbf{55},58,62 \\
		20&\textbf{39},\textit{44},57,61,62,66 \\
		21&41,47,60,61,64,65,\textbf{66},69 \\
		22&43,49,50,63,64,73,88,89 \\
		23&45,\textit{51},66,71,76,87,\textbf{88},\textbf{89},92,93,95,\textbf{96},\textbf{97} \\
		24&47,\textit{53},54,69,74,\textbf{75},80,91,92,93,96,97,99,\textbf{101} \\
		25&49,56,57,\textbf{72},73,77,\textbf{78},79,83,95,96,97,100,101 \\
		26&51,\textit{58},75,87,\textbf{100},101,104,105,106 \\
		27&53,\textit{60},61,78,84,90,103,\textbf{104},\textbf{105},108,109,110,114,115 \\
		28&55,\textit{62},63,64,81,\textbf{82},87,88,94,107,108,\textbf{109},112,113,114,117,118,119 \\
		29&57,\textit{64},\textit{65},84,\textbf{85},90,91,\textbf{92},97,111,112,113,116,117,118,121,122,123,124 \\
		30&59,\textit{67},68,87,101,115,116,117,120,121,122,126,127,\textbf{128} \\
		31&61,\textit{69},70,71,90,\textbf{91},97,104,119,\textbf{120},121,124,125,126,127,130,131,\textbf{132},\textbf{133} \\
		32&63,\textit{71},\textit{72},93,100,\textbf{101},\textit{107},108,123,\textbf{124},\textbf{125},129,130,131,135,136,137,160,161 \\
		
		33&65,\textit{73},\textit{74},75,96,97,103,\textbf{104},105,111,127,128,\textbf{129},133,134,135,139,140,141, \\
		
		&158,\textbf{159},\textbf{160},\textbf{161},165,166,168,169,\textbf{170},\textbf{171} \\

		34&67,\textit{76},77,78,99,100,\textit{106},\textit{114},115,131,132,\textbf{133},137,138,139,143,144,145,\textbf{146},\\
		
		&163,164,165,166,170,171,173,174,\textbf{175},\textbf{176},\textbf{177} \\
		
		35&69,\textit{78},\textit{79},102,\textit{109},110,\textit{117},118,135,136,\textbf{137},141,142,143,148,149,\textbf{150},168,169,\\
		
		&170,171,175,176,179,\textbf{180},\textbf{181},\textbf{182} \\
		
		36&71,\textit{80},\textit{81},82,\textbf{105},113,114,122,139,140,141,145,146,147,\textbf{148},152,153,\textbf{154},155,\\
		
		&174,175,176,180,181,184,\textbf{186},\textbf{187} \\
		
		37&73,\textit{82},\textit{83},84,\textbf{85},\textbf{108},109,116,117,118,\textit{124},125,143,144,145,150,151,\textbf{152},157,\\
		
		&158,159,178,179,\textbf{180},\textbf{181},183,185,186,187,189,\textbf{192},\textbf{193},195,196 \\
		
		38&75,\textit{85},\textit{86},\textbf{111},\textit{119},\textit{128},129,147,\textbf{148},149,154,155,\textbf{156},161,162,163,183,184,185,\\
		&\textbf{186},189,190,\textbf{191},192,\textbf{197},198,201 \\
		
		39&77,\textit{87},\textit{88},89,114,\textit{122},123,132,151,\textbf{152},153,158,159,\textbf{160},165,166,167,\textbf{168},188,\\
		
		&189,190,191,194,195,\textbf{196},\textbf{197},201,202,203,206 \\
		
		40&79,\textit{89},\textit{90},91,\textbf{92},117,118,\textit{125},126,127,\textit{134},\textit{135},136,155,\textbf{156},157,162,163,\textbf{164},\\
		&170,\textbf{171},\textbf{172},193,194,195,196,199,200,\textbf{201},\textbf{202},205,206,207,208,209,212 \\\hline
	\end{tabular}
\end{table}%

\begin{table}[htbp]
	\centering
    \caption{$v$'s with an improved lower bound on $C(v,k,t)$ when $t=4,5$}\label{table:t45}
	\footnotesize
	\begin{tabular}{|l||l||l|l||}
		\hline
		& \multicolumn{1}{|c||}{$t=4$}&\multicolumn{2}{|c||}{$t=5$}\\\hline
		$k$ & $s=1$& $s=1$& $s=2$\\
		\hline
		9&&&17\\
		11&&&\textbf{29} \\
		14&&&\textbf{47} \\
		15&&&42\\
		16&33& &55\\
		17&\textbf{30},35  & &\textbf{59} \\
		18&32,37  & &\textbf{66} \\
		19&34,39  & &70\\
		20&\textbf{39},41  &&\\
		21&37,\textbf{41},43  & &75,93 \\
		22&39,43,45,\textbf{46}  &36&\textbf{79},\textbf{98} \\
		23&41,45,\textbf{48},52  & &\textbf{87},123 \\
		24&43,47,\textbf{50}  &&\\
		25&\textbf{37},45,49,\textbf{52},59  &41&113,\textbf{135},141 \\
		26&51,\textbf{54}  & &\textbf{118} \\
		27&\textit{47},48,53  &44&\textbf{127},147 \\
		28&50,55,64,66,\textbf{68},70  &46,52 &132,\textbf{153} \\
		29&43,52,57,61,\textbf{69}  &54& \\
		30&\textbf{54},59,63,73,75,76  &49,54,56 &\textit{138},\textit{147},161,\textbf{192} \\
		31&\textit{54},\textbf{56},61,65,71,73,74,\textbf{80}  &51,56 &143,171,199,\textbf{206} \\
		32&\textit{56},\textit{57},63,67,74,78,81  &65&\textbf{148},\textbf{177},\textbf{206},213 \\
		33&49,59,65,69,76,78,79,80,\textbf{81},\textbf{85},88  &54,67 &\textbf{158} \\
		34&61,67,71,81,86  &56,61,69 &216\\
		35&\textit{61},\textbf{63},69,73,81,83,84,85,86,\textbf{90},93  &63,\textbf{66},71 &\textbf{227},231,235,\textbf{259} \\
		36&\textit{63},65,71,75,\textbf{76},83,86,91,92,93,96,97  &59,65,68,73 &201\\
		37&55,\textit{65},\textit{66},67,73,\textbf{78},88,\textbf{89},90,91  &61,67,70,75 &\textbf{207},275 \\
		38&\textit{68},\textbf{75},80,88,91,\textbf{93},96,97,101,105  &77&\textbf{218},248,\textbf{283} \\
		39&\textit{68},70,\textbf{77},82,90,93,95,96,99,104  &64,70,79 &224,\textbf{255},264,\textbf{287},\textbf{299} \\
		40&\textit{70},72,\textbf{79},84,95,\textbf{96},98,101,102,103,106,\textbf{107},113,114 &66,72,81 &\textit{230},299,307 	\\
		
		\hline
		
	\end{tabular}
\end{table}%

\begin{table}[htbp]
	\centering
    \caption{$v$'s with an improved lower bound on $C(v,k,t)$ when $t=6,7,8$}\label{table:t678}
	\footnotesize
	\begin{tabular}{|l||l|l|l||l|l|l||l|l|l|}
		\hline
		& \multicolumn{3}{|c||}{$t=6$}& \multicolumn{3}{|c||}{$t=7$}& \multicolumn{3}{|c|}{$t=8$}\\\hline
		$k$ & $s=1$& $s=2$& $s=3$& $s=1$& $s=2$& $s=3$& $s=1$& $s=2$& $s=3$\\\hline
		9& & &\textbf{25} &&&&&&\\
		12& &23& &&&&&&\\
		16& &\textbf{29} & &&&&&&\\
		17& &33,38 & & &31& &&&\\
		18& &43& &&&&&&\\
		19&&&& & &\textbf{75} &&&\\
		20&&&& &\textbf{39} & &&&\\
		21&30&57& &&&&&&\\
		22&33& & &&&&&&\\
		23& &58& & &\textbf{51},53 & &&&\\
		24&36& & &33& & &&&\\
		25& &\textit{58} & & & &\textbf{125} &&&\\
		26&\textbf{36},39 & & &&&&&&\\
		27& &\textbf{63},75 & &&&& & &\textbf{97}  \\
		28& &68,78 & & &68&166&&&\\
		29& &\textbf{83},94 & &40&68& &&&\\
		30& &86& & &\textit{57} & &&&\\
		31&43,49 &\textit{89} & &&&&41& &  \\
		32& &\textit{82} & & &61,77 &221&&&\\
		33&52,55 &\textbf{117},127 & &45&65,82,85 & & &63&  \\
		34&47& & & &86,92 & &45& &  \\
		35&55,58 &91,143& &&&&&&\\
		36&50,57,60 &124& & &\textbf{102},\textbf{105},107 & & &71&  \\
		37& &\textbf{122},139,156 & &&&&49& &170,181\\
		38&60,63 &\textit{108},\textbf{122},136,\textbf{143} & &52&102& &\textbf{52} & &  \\
		39&54,65 &\textit{111} & &\textbf{58} & &&&&\\
		40&61,63 &114,179& 	&&&&53&96& \\\hline
	\end{tabular}
\end{table}%

\medskip

\noindent \textbf{Acknowledgements:} Thanks to Peter Dukes and Vedran Kr\v{c}adinac for the discussions at the \emph{Combinatorics 2016} conference that led to this research. The first author was supported by Australian Research Council grants DE120100040 and DP150100506.

\end{document}